\newtheorem{theorem}{Theorem}[section]
\newtheorem{prop}[theorem]{Proposition}
\newtheorem{lemma}[theorem]{Lemma}
\newtheorem{conjecture}[theorem]{Conjecture}
\theoremstyle{remark}
\newtheorem{remark}[theorem]{Remark}
\newcommand{\CC}{\mathcal{C}}
\newcommand{\Av}{\operatorname{Av}}
\newcommand{\bonds}{\operatorname{bond}}
\newcommand{\lead}{\ell}
\newcommand{\lrmin}{\operatorname{lrmin}}
\newcommand{\Si}{\operatorname{Si}}
\newcommand{\Y}{\mathcal{Y}}
\newcommand{\T}{\mathcal{T}}
\newcommand{\id}{\operatorname{id}}
\newcommand{\makeset}[2]{ \{#1\;|\;#2\} }
\newcommand{\LR}[1]{ LR\left(#1\right) }
\definecolor{light-gray}{gray}{0.6}
\definecolor{dark-gray}{gray}{0.4}
\numberwithin{equation}{section}
\begin{document}

\title{Egge triples and unbalanced Wilf-equivalence}

\date{}

\author{
Jonathan Bloom\\
Lafayette College\\
Department of Mathematics\\
Easton, PA 18042, USA\\
{\tt bloomjs@lafayette.edu}
\and 
Alexander Burstein\\
Howard University\\
Department of Mathematics\\
Washington, DC 20059, USA\\
{\tt aburstein@howard.edu}
}

\maketitle

\begin{abstract}
Egge~\cite{Egge} conjectured that permutations avoiding the set of patterns $\{2143,3142,\tau\}$, where $\tau\in\{246135,254613,524361,546132,263514\}$, are enumerated by the large Schr\"oder numbers (and thus $\{2143,3142,\tau\}$ with $\tau$ as above is Wilf-equivalent to the set of patterns $\{2413,3142\}$). Burstein and Pantone~\cite{BP} proved the case of $\tau=246135$. We prove the remaining four cases. As a byproduct of our proof, we also enumerate the case $\tau=4132$.
\end{abstract}

\maketitle

\section{Introduction}

Writing a permutation $\pi\in S_n$ as $\pi_1\ldots \pi_n$ with \emph{length} $ |\pi|=n$,  we say $\pi$ \emph{avoids}  $\tau\in S_k$ if no subsequence of $\pi$ is order isomorphic to $\tau$.  In this setting we refer to $\tau$ as a \emph{pattern}.  For a set of patterns $T$, we denote by $\Av_n(T)$ the set of $\pi\in S_n$ that simultaneously avoid all of $T$. In this case we say $T$ is a \emph{basis} for $\Av(T)$.  We set $\Av(T) = \cup_{n\geq 0} \Av_n(T)$.   

Two sets of patterns, $T_1$ and $T_2$, are said to be \emph{Wilf-equivalent}  provided $|\Av_n(T_1)|=|\Av_n(T_2)|$ for all $n\ge 0$. Moreover, a Wilf-equivalence is said to be \emph{unbalanced} if, for some $k\ge 0$, $T_1$ and $T_2$ contain a different number of patterns of length $k$. Until recently, the only documented unbalanced Wilf-equivalence (see~\cite{AMR}) was 
\[
\{1342\}\qquad\textrm{and}\qquad \makeset{2,2m-1,4,1,6,3,8,5,\dots,2m,2m-3}{m=2,3,4,\dots},
\]
which involves a finite and an infinite set. (And, clearly, allowing both sets $T_1$ and $T_2$ to be infinite, makes it trivial to recursively construct unbalanced Wilf-equivalences.) In 2012, though, Egge~\cite{Egge},   made the following conjecture at the AMS Fall Eastern Meeting.

\begin{conjecture}[Egge, 2012] \label{conj:egge}
Fix $\tau\in\{246135,254613,524361,546132,263514\}$.  Then
\begin{align*}
\sum_{n\geq 0} |\Av_n(2143,3142,\tau)|x^n = \frac{3-x-\sqrt{1-6x+x^2}}{2},	
\end{align*}
i.e., $\Av_n(2143,3142,\tau)$ is counted by the large Schr\"oder numbers for $n\ge 0$.  
\end{conjecture}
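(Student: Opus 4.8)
The plan is to prove all four cases at once, by first determining the structure of the ambient class $\CC=\Av(2143,3142)$ and then locating, inside that structure, exactly which permutations contain a given $\tau$. The key observation is that $2143$ and $3142$ are precisely the two permutations consisting of ``two inversions in series'': an occurrence of either in $\pi$ is a choice of positions $i<j<k<l$ with $\pi_i>\pi_j$, $\pi_k>\pi_l$, $\pi_i<\pi_k$ and $\pi_j<\pi_l$, i.e.\ two non-overlapping inversions the later of which lies strictly higher in both coordinates; so $\pi\in\CC$ exactly when no inversion of $\pi$ stacks, in series, strictly above another. I would feed this into the left-to-right-minima decomposition: list the left-to-right minima $\pi_{p_1}>\cdots>\pi_{p_m}$ and, for each $t$, the block $\beta_t$ of entries sitting in the columns strictly between $p_t$ and $p_{t+1}$ (all of which lie above the floor $\pi_{p_t}$). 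The decreasing sequence of minima never stacks against itself, so the whole obstruction comes from inversions internal to the blocks and from inversions running from a block down to a later minimum; forbidding these from stacking forces each $\beta_t$ to lie in $\CC$ recursively and imposes strong compatibility between $\beta_t$ and the later minima and blocks --- in effect the columns still available above $\pi_{p_t}$ get squeezed once $\beta_t$ contains an inversion. This is the content the macros \lrmin, \bonds and \lead are set up to track (``bonds'' --- adjacent entries differing by $1$ --- marking the columns where the staircase of minima is rigid), and the outcome should be a grammar for $\CC$ whose only catalytic datum is the value of the current minimum relative to the unfilled columns.

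For each $\tau\in\{254613,524361,546132,263514\}$ I would then prove: a permutation $\pi\in\CC$ contains $\tau$ if and only if one short local configuration occurs in the grammar --- a single block $\beta_t$, together with its floor $\pi_{p_t}$ and one entry harvested from a later block, already realizes $\tau$ --- and, since $\pi\in\CC$, this forces $\beta_t$ itself to contain $4132$ (or, depending on $\tau$, a length-four obstruction differing from $4132$ by an elementary symmetry). This is exactly where $\Av(2143,3142,4132)$ enters as a byproduct: it is the family of admissible blocks $\beta_t$ once $\tau$ is forbidden. The four patterns differ only in the relative heights of their two middle entries and in which column carries the extra ascent, so the four equivalences are proved in parallel, the discrepancies amounting only to which block and which column the obstruction is detected in.

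Granting these two ingredients, $\Av(2143,3142,\tau)$ is governed by the grammar for $\CC$ with the blocks restricted to $\Av(2143,3142,4132)$, and I would convert this to a functional equation for $F(x,u)$, with $u$ recording the catalytic statistic. Eliminating $u$ by the kernel method leaves a quadratic for $F(x,1)$ that reduces to $y^2+(x-3)y+2=0$, whose power-series root is $\tfrac12\bigl(3-x-\sqrt{1-6x+x^2}\bigr)$, the generating function of the large Schr\"oder numbers; this proves Conjecture~\ref{conj:egge} and hence the unbalanced Wilf-equivalence of $\{2143,3142,\tau\}$ with $\{2413,3142\}$. Running the same computation one level down (counting the blocks themselves) gives the enumeration of $\Av(2143,3142,4132)$.

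The step I expect to be genuinely hard is the ``only if'' half of the second paragraph. An occurrence of the \emph{length-six} pattern $\tau$ in a member of $\CC$ need not be confined to a single block, and the proof must exploit the rigidity forced by the absence of stacked inversions to slide and merge the six entries of any such occurrence until they all sit inside one block (or, in a couple of cases, an adjacent pair of blocks) exhibiting the length-four obstruction. Which slides are legal is dictated by the internal order of $\tau$, so this localization is precisely where the four patterns must be treated one at a time, and I would not expect a completely uniform argument there.
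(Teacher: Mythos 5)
There is a genuine gap, and it sits exactly where you predict it would: the claimed localization lemma --- that for $\pi\in\Av(2143,3142)$ an occurrence of the length-six pattern $\tau$ can always be traced to a single block of the left-to-right-extrema decomposition containing a length-four obstruction such as $4132$ --- is asserted rather than proved, and in the uniform form you state it is not true. Take $\pi=\tau=254613$ itself: it lies in $\Av(2143,3142)$, its left-to-right minima are the $2$ and the $1$, and its two blocks are order-isomorphic to $213$ and $1$; no block contains a length-four pattern at all, and the occurrence of $\tau$ is irreducibly spread over the floor, a block, a later minimum, and a later block. Consistent with this, the actual structure of $\Av(2143,3142,254613)$ (Section~\ref{sec:254613} of the paper) is \emph{self-referential}: the rightmost gap is filled by a skew-sum of arbitrary members of the class itself, separated by left-to-right maxima --- it is not obtained by restricting blocks to the fixed class $\CC(4132)$. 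The class $\CC(4132)$ plays the role you assign it only for $\tau\in\{524361,546132\}$ (and even there the two cases need separate combinatorial arguments, restricting different blocks, that merely happen to produce the same functional equation), while $\tau=263514$ resists any gap decomposition and is handled in the paper via simple permutations and inflation, with $\CC(4132)$ entering only through the identity $\Si(\CC(263514))=\Si(\CC(4132))$. So the promised ``four cases in parallel'' collapses into three genuinely different arguments, which is the substance of the problem.

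Two further cautions. First, $\{2143,3142\}$ is not closed under the complementation that exchanges left-to-right maxima and minima (its complement is $\{3412,2413\}$), so the left-to-right-\emph{minima} grammar you propose is not a mirror image of the paper's Lemmas~\ref{lem:2143} and~\ref{lem:2143-3142} and would have to be established from scratch; your ``two inversions in series'' reading of the basis is correct but does not by itself yield the block structure. Second, the endpoint you describe --- a kernel-method elimination leading to $y^2+(x-3)y+2=0$ --- does match the factor $(B^2+(x-3)B+2)$ that the paper extracts in the $254613$ case, but that computation is the easy part; each of the paper's three arguments first needs a bijectively verified decomposition (with uniqueness of the construction, cf.\ the discussion of why \texttt{block insertion} must be restricted to permutations with at least two gaps) before any functional equation can be written down.
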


As the large Schr\"oder numbers count several other pattern-avoiding classes, for example, $\{2413,3142\}$ or $\{1243,2143\}$ (see Kremer and Shiu~\cite[Table 1]{KS}), Egge's conjecture implies several examples of this unbalanced phenomenon where both sets are finite.  That said, other such examples have recently been proved.  In~\cite{BP} Burstein and Pantone show that $\{1234\}$ and $\{1324,3416725\}$ are yet another unbalanced Wilf-equivalence.  (This instance was independently shown by Jel\'{\i}nek in his doctoral thesis~\cite{Jelinek} that remains unpublished.)  Moreover, in the same paper, Burstein and Pantone prove the case for $\tau = 246135$.  

We would be remiss if we did not mention another motivation behind Egge's conjecture.  Consider the two enumeration sequences, listed by Kremer and Shiu~\cite[Table 1]{KS}:
\bigskip
\begin{center}
$
\begin{array}{r|c c c c c c c}
\hline
n= & 2 & 3 & 4 & 5 & 6 & 7 &\ldots \\
\hline
\Av_n(2143,3142) &  2 & 6 & 22& 90 & 395 & 1823 &\ldots\\
n \textrm{th large Schr\"oder \#} & 2 & 6&22&90&394 & 1806&\ldots
\end{array}$
\end{center}
\bigskip
As these two sequences are ``almost'' the same, they beg the following question:  What (if any) singleton patterns of length 6 can be added to $\{2143,3142\}$ so that the resulting class is enumerated by the large Schr\"oder numbers?  Egge's conjecture suggests 5 such patterns and computational evidence confirms that these 5 values of $\tau$, plus their $180^\circ$ rotations, are the only patterns that yield the large Schr\"oder numbers.

As noted above, Burstein and Pantone previously proved the case when $\tau =246135$.   In this paper, we prove the remaining four cases.   We structure the paper as follows.  In the next two sections we prove the cases $\tau = 254613, 524361$ and $546132$.  In each of these cases, we demonstrate a decomposition of the permutations in $\Av_n(2143,3142,\tau)$ based on left-to-right maxima (LR-maxima).  We then translate this combinatorial decomposition into a functional equation, which we solve using the kernel method.    In our last section we tackle the remaining pattern $263514$ whose enumeration is based on the idea of simple permutations.  It should be noted that Burstein and Pantone in~\cite{BP} enumerated the case of $\tau = 246135$ using simple permutations as well.  

In the four cases we consider in this paper we will demonstrate that the generating function for the large Schr\"oder numbers is a solution to a certain functional equation.  We note here that this is actually sufficient; we do not need to address uniqueness in each case. The reason for this is that the recursive nature of our functional equations implies that any solution is determined by the first few terms of its Taylor series.  Therefore, any solution found whose Taylor series begins with $1+x+2x^2+6x^3$ must be the \emph{only} solution whose Taylor series begins in this manner.    

We occupy the remainder of this section with some basic definitions and lemmas that will be used throughout the sequel.  For brevity, we set $\CC(\tau) = \Av(2143,3142,\tau)$ and use standard subscript notation to refine this set (and any other) by length.

As mentioned above, many of our arguments will lean heavily on the idea of LR-maxima.  In particular, an \emph{LR-maximum} in a permutation $\pi$ is an index $i$ such that  $\pi_j<\pi_i$ for all $j<i$.  We denote by $\LR{\pi}$ the set of all LR-maxima in $\pi$.  Denoting $\LR{\pi} = \{i_1<\cdots <i_s\}$, we observe that 
$i_1 = 1$, $\pi_{i_s} = n$, $\pi_{i_1} < \cdots <\pi_{i_s}$, and $ \pi_{i_j}>\pi_x$ provided $i_j<x<i_{j+1}$.  We define a \emph{horizontal gap} to be an index $i\in \LR{\pi}$ such that $i<n$ and $i+1\notin \LR{\pi}$.  In other words, a horizontal gap is an index $i$ that is both a descent and a LR-maximum. We define a \emph{leading maximum} to be an index $i\in \LR{\pi}$ such that $\{1,\ldots, i\} \in \LR{\pi}$.  We denote the number of leading maxima in $\pi$ by $\lead(\pi)$.  Note that $n$ is a leading maximum if and only if $\pi = 1\ldots n$. In this case, $\ell(\pi) = n$ as well. 

Given any two permutations $\alpha,\beta$ we define $\alpha\oplus \beta$ and $\alpha\ominus\beta$ to be the classical sum and skew-sum, respectively.  It will also be convenient to have a slightly more general version of ``skew-sum''.  To this end, let $i$ be a leading maximum of $\beta$ and define $\alpha\ominus_i \beta$ to be the permutation obtained from $\alpha\ominus \beta$ by sliding the first $i$ leading maxima to the left of $\alpha$.  Pictorially, we have

\begin{center}
\begin{tikzpicture}[scale=0.4, baseline = (current bounding box.center)]

\begin{scope}[shift={(0,0)}]
\node at (-3,3) {$\alpha\ominus_i\beta=$};

\draw[fill = blue!30!white] (0,0) rectangle (1,4);
\draw[fill = blue!30!white] (3,0) rectangle (8,4);
\draw (0,0) rectangle (8,4);
\draw (1.0,4.1) rectangle (3,6.1);

\node at (2,5) {\Large $\alpha$};
\draw[very thick] (0,0) -- (1,2);
\draw[very thick,dotted] (1,0)--(1,4);
\draw[very thick,dotted] (3,0)--(3,4);
\node at (5,2) {\Large $\beta$};
\end{scope}
\end{tikzpicture}
\end{center}
where the diagonal line represents the $i$ LR-maxima that have been ``extracted" to the left of $\alpha$.  Consequently, we refer to this construction as {\tt extraction}.  (Although not depicted in the diagram, it should be stressed that only the positions and not the values of the first $i$ LR-maxima  in $\beta$ change.)   

An observation that will be useful in what follows is if $\pi$ is any permutation with exactly one horizontal gap, then
$$\pi = (1\ominus_i \beta)\oplus 1\ldots m,$$
for some $m, i$, and $\beta$ such that $i\le\ell(\beta)$ and $i<|\beta|$.  

As the next lemma shows, the idea of {\tt extraction} also plays nicely with the patterns of interest.  As the proof of this lemma follows  directly from the definitions involved, we omit its proof. 

\begin{lemma}\label{lem:extraction} 
Fix $\tau\in\{254613, 524361,546132\}$ and $\beta\in\CC(\tau)$, then 
$$1\ominus_i \beta\ \in \CC(254613),$$ where $i\le\ell(\beta)$.
\end{lemma}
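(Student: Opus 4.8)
The plan is to argue by contradiction: suppose $\pi = 1\ominus_i \beta$ contains an occurrence of $254613$, and extract a contradiction with the hypothesis that $\beta \in \CC(\tau)$ avoids $2143$, $3142$, and $\tau$. First I would fix notation for the structure of $\pi$. Writing $\beta$ with its leading maxima $b_1 < b_2 < \cdots$ in positions $1,2,\ldots$, the permutation $1\ominus_i\beta$ places the single entry $1$ (of the ``$1$'' block) below and to the right of the first $i$ leading maxima of $\beta$, with the rest of $\beta$ to its lower right; more precisely, in $\pi$ the first $i$ positions carry the (relatively) largest values, then comes a position carrying the globally smallest value, then comes the remainder $\beta'$ of $\beta$ (which is just $\beta$ with its first $i$ leading maxima deleted). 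The key combinatorial facts I will use are: (a) the first $i$ entries of $\pi$ are increasing and are all larger than everything to their right; (b) the single ``$1$'' entry sits immediately after them and is the minimum; (c) deleting those $i+1$ entries leaves a copy of $\beta'$, which is a pattern in $\beta$, hence avoids $2143, 3142, \tau$.

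Next I would perform a case analysis on how many of the six entries of a putative $254613$-occurrence lie in the ``extracted block'' (the first $i$ positions), how many is the ``$1$'', and how many lie in $\beta'$. Since the extracted block is increasing and dominates everything after it, at most one of its entries can participate in $254613$ as one of the two ``top'' entries reading left-to-right — in the pattern $254613$ the only entries that could be realized by something that is both leftmost and largest-so-far are the first few, and one checks that at most the entry playing the role of ``$5$'' (the $2$nd position, value $5$) or ``$6$'' (the $5$th position) can come from a left-dominating block, and they cannot both, because $5$ precedes $4$ and $6$ precedes $1$ in the pattern while the block is increasing and left of everything. Likewise the minimum entry ``$1$'' of $\pi$ can only play the role of the ``$1$'' in the pattern (value $1$, in position $5$), and then its right neighbor ``$3$'' must come from $\beta'$. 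Running through the (few) admissible distributions, each one forces a forbidden sub-pattern inside $\beta'\subseteq\beta$: typically the four entries of the occurrence that lie in $\beta'$ already form a copy of $2143$ or $3142$, or, when fewer entries lie in $\beta'$, the entries from $\beta'$ together with the structural facts (a)–(c) reconstruct a copy of $\tau$ — here the choice $\tau=254613$ is irrelevant, since in the three cases $\tau\in\{254613,524361,546132\}$ the same bookkeeping works, and indeed that is exactly why the lemma is stated uniformly. The constraint $i\le\ell(\beta)$ is what guarantees the extracted entries really are leading maxima of $\beta$ (so their relative order and dominance is as claimed), and $i<|\beta|$ (implicit, since $i\le\ell(\beta)$ and if $i=|\beta|$ then $\beta=\mathrm{id}$ and $\pi$ is an identity, trivially fine) keeps $\beta'$ nonempty in the interesting range.

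I expect the main obstacle to be the case where the occurrence of $254613$ straddles all three regions in a ``balanced'' way — e.g.\ one entry from the extracted block, the ``$1$'', and four entries from $\beta'$ — where one must be careful that the four $\beta'$-entries, which by themselves might only form a length-$4$ pattern that $\beta$ is allowed to contain, actually combine with the block entry and the ``$1$'' to re-create the length-$6$ forbidden pattern $\tau$ inside $\beta$. Making that reconstruction precise requires tracking not just the pattern of the four $\beta'$-entries but their position relative to the leading maxima of $\beta$ that were extracted, i.e.\ undoing the extraction. Since undoing extraction only moves leading maxima back to where they belong in $\beta$ without changing relative values, this is bookkeeping rather than a genuine difficulty, which is presumably why the authors say the proof ``follows directly from the definitions'' and omit it; nonetheless a careful write-up would enumerate the handful of cases explicitly. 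I would organize the final write-up as: set up the three regions and facts (a)–(c); observe which pattern-roles each region can fill; then dispatch the $O(1)$ remaining cases, in each exhibiting the forbidden pattern in $\beta$.
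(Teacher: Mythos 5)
The paper omits the proof of this lemma entirely, so the only question is whether your argument stands on its own; it does not, because your structural description of $1\ominus_i\beta$ is inverted. By the definition of skew-sum, the singleton block ``$1$'' in $1\ominus_i\beta$ sits \emph{above} $\beta$, so the entry it contributes is the global \emph{maximum} of $1\ominus_i\beta$, not the minimum; and the $i$ extracted entries, being the first $i$ leading maxima of $\beta$ with only their positions (not values) changed, form an increasing initial run that is \emph{not} larger than everything to its right (the singleton entry and the later LR-maxima of $\beta$ all exceed them). Your facts (a) and (b), and hence the entire role-assignment analysis built on them (``the minimum entry can only play the role of the $1$,'' ``at most one block entry can be a top entry''), are false. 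The correct short argument runs in the opposite direction: the remaining entries of $1\ominus_i\beta$ form an exact copy of $\beta$ (the extracted entries already occupied positions $1,\dots,i$ of $\beta$, so extraction preserves their relative order), and the new entry, being a global maximum preceded only by an increasing run, could only realize the maximal letter of a forbidden pattern; but in each of $2143$, $3142$, $254613$, $524361$, $546132$ the letters preceding the pattern's maximum contain a descent, so they cannot all be drawn from an increasing run. Hence no occurrence uses the new entry, and any occurrence lies inside the copy of $\beta$.

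There are two further gaps. First, membership in $\CC(254613)$ also requires avoidance of $2143$ and $3142$, and your proposal only ever discusses putative occurrences of $254613$. Second, your assertion that ``the same bookkeeping works'' for all three choices of $\tau$ cannot be sustained for the statement as literally printed: $\beta=254613$ lies in $\CC(546132)$ (it avoids $2143$ and $3142$, and it trivially avoids the distinct length-$6$ pattern $546132$), yet $1\ominus_1\beta=2754613$ contains $254613$ at positions $1,3,4,5,6,7$. The conclusion must be read as $1\ominus_i\beta\in\CC(\tau)$, which is how the lemma is actually used in Sections 2 and 3, and the argument sketched above proves exactly that; a proof of the printed statement is impossible, so a correct write-up has to flag and repair the statement rather than claim uniformity across the three cases.
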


As all the permutation classes of interest contain both 2143 and 3142 in their basis, we next prove a couple of lemmas that illuminate the structure of $(2143,3142)$-avoiding permutations.

\begin{lemma} \label{lem:2143}
Fix $\pi \in \Av_n(2143)$ and $\ell = \lead(\pi)$. For any $i\notin \LR{\pi}$, we have $\pi_{\ell} > \pi_i$. In other words, 
$$
\{\pi_i\ |\ \ell\leq i \textrm{ and } i\in\LR{\pi}\}=\{\pi_\ell,\pi_{\ell+1}, \ldots, n\}.
$$

\end{lemma}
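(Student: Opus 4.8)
The plan is to prove the first assertion by contradiction and then to read off the displayed equality. Assume $\ell<n$ (otherwise $\LR{\pi}=\{1,\dots,n\}$ and the statement is vacuous), and suppose for contradiction that some $i\notin\LR{\pi}$ has $\pi_i>\pi_\ell$. From this I will extract an occurrence of the pattern $2143$, contradicting $\pi\in\Av_n(2143)$.

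First I would record what $\ell=\lead(\pi)$ buys us: the first $\ell$ entries increase, $\pi_1<\cdots<\pi_\ell$, so $\pi_\ell=\max\{\pi_1,\dots,\pi_\ell\}$; and since $\ell$ is the \emph{last} leading maximum we have $\ell+1\notin\LR{\pi}$, hence $\pi_{\ell+1}<\pi_\ell$. As positions $1,\dots,\ell$ are all LR-maxima and $i$ is not, $i>\ell$. Now let $m$ be the position of the largest entry among $\pi_1,\dots,\pi_{i-1}$; since $i\notin\LR{\pi}$, that entry strictly exceeds $\pi_i$, so $\pi_m>\pi_i>\pi_\ell$. Because $\pi_\ell$ dominates the first $\ell$ entries, this forces $m>\ell$, and $m\ne\ell+1$ because $\pi_{\ell+1}<\pi_\ell<\pi_m$; hence $m\ge\ell+2$, and $m<i$, so $\ell<\ell+1<m<i$ are four positions listed in strictly increasing order.

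The conclusion is then immediate: at these four positions the entries are $\pi_\ell,\pi_{\ell+1},\pi_m,\pi_i$, and by the inequalities above $\pi_{\ell+1}<\pi_\ell<\pi_i<\pi_m$, so their relative order is $2143$, contradicting $\pi\in\Av_n(2143)$. This proves $\pi_\ell>\pi_i$ for every $i\notin\LR{\pi}$. For the displayed reformulation I would observe that the LR-maxima of index $\ge\ell$ carry strictly increasing values running from $\pi_\ell$ up to $\pi$'s maximum $n$, so they all lie in $\{\pi_\ell,\dots,n\}$; conversely, any value $v$ with $\pi_\ell\le v\le n$ sits at some position $p$, which satisfies $p\ge\ell$ since $\pi_\ell$ dominates the first $\ell$ entries, and which must be an LR-maximum, since otherwise the first assertion would yield $v=\pi_p<\pi_\ell\le v$. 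Equality of the two sets follows.

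The argument has essentially one moving part — the $2143$-extraction — and I do not expect a real obstacle. The only place that needs care is confirming that the four chosen indices $\ell,\ell+1,m,i$ are distinct and correctly ordered: this is exactly where one uses $\pi_{\ell+1}<\pi_\ell$ (to separate $\ell+1$ from $m$) together with the maximality of $\pi_m$ over the prefix $\pi_1\dots\pi_{i-1}$ (to push $\pi_m$ above $\pi_i$). The rest is bookkeeping about leading maxima.
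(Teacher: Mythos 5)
Your proof is correct and follows essentially the same route as the paper's: both locate an occurrence of $2143$ at positions $\ell,\ell+1,m,i$ (the paper calls your $m$ a suitable LR-maximum $k$ with $\ell+1<k<j$), using $\pi_{\ell+1}<\pi_\ell<\pi_i<\pi_m$. Your explicit verification of the displayed set equality is a small addition the paper leaves implicit, but the core argument is identical.
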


\begin{proof}
Fix $\pi\in \Av_n(2143)$ and let $\ell$ be the largest leading maximum of $\pi$.  If $\ell = n$, then  $\pi = 1\ldots n$ and there is nothing to prove. On the other hand, if $\ell<n$, then, as $\ell$ is the largest leading maximum,  $\pi_\ell>\pi_{\ell+1}$, i.e., $\ell$ is a descent. Now assume for a contradiction that there is some index $j\notin \LR{\pi}$ with the property that  $\pi_\ell < \pi_j$.   As $\ell$ is a descent, it follows that $\ell+1<j$.  As $j$ is not a LR-maximum, there must exist some $k\in \LR{\pi}$ such that $\ell+1<k<j$.  This immediately implies that $\pi_\ell\pi_{\ell+1}\pi_k\pi_j$ is an occurrence of $2143$, a contradiction. 
\end{proof}

\begin{remark}
It follows from the proof of Lemma~\ref{lem:2143} that if $\pi\neq 1\ldots n$, then $\ell$ is both an LR-maximum and a descent, and hence also a horizontal gap.  We will tacitly use this fact in the sequel.  
\end{remark}

If our permutations also avoid $3142$, we can say more.  

\begin{lemma} \label{lem:2143-3142}
Every $\pi\in\Av_n(2143,3142)$ decomposes as in Figure~\ref{fig:2143_4132_decomp}.  That is, if $\pi$ has $g$ horizontal gaps and if $\beta^i$ is the permutation strictly in the $i$th horizontal gap, then 
$$\beta^1\ominus \beta^2\ominus\cdots \ominus \beta^g,$$
is the permutation obtained by removing all the LR-maxima in $\pi$.  
\end{lemma}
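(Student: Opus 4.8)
The plan is to analyze the structure of a permutation $\pi\in\Av_n(2143,3142)$ by first stripping out its LR-maxima and then showing that what remains is the direct skew-sum $\beta^1\ominus\beta^2\ominus\cdots\ominus\beta^g$ asserted. Write $\LR{\pi}=\{i_1<\cdots<i_s\}$ with $i_1=1$ and $\pi_{i_s}=n$, and let the horizontal gaps of $\pi$ be the indices $h_1<\cdots<h_g$ (recall a horizontal gap is an LR-maximum $i<n$ with $i+1\notin\LR{\pi}$). By definition of $\beta^i$, the entries of $\pi$ lying strictly between consecutive LR-maxima form a (possibly empty) block, and these blocks are nonempty precisely when the left endpoint is a horizontal gap; so the non-LR-maxima of $\pi$, read left to right, are exactly the concatenation of the blocks $B_1,\dots,B_g$ whose patterns are $\beta^1,\dots,\beta^g$. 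The content of the lemma is therefore the single claim that, as a subsequence of $\pi$, every entry of $B_a$ is larger than every entry of $B_b$ whenever $a<b$; equivalently, block $B_a$ sits entirely above and to the left of block $B_b$, which is exactly the skew-sum statement.

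First I would record the key tool: since $\pi$ avoids $2143$, Lemma~\ref{lem:2143} gives that the non-LR-maxima all lie below the value $\pi_\ell$, where $\ell=\lead(\pi)$ is the largest leading maximum. More generally I want a "local" version of this. Fix a horizontal gap $h=h_a$ and let $h'$ be the next LR-maximum after $h$ (so $h'>h+1$, and the entries strictly between positions $h$ and $h'$ are exactly block $B_a$). Every entry $x$ of $B_a$ satisfies $\pi_x<\pi_h$ by the basic properties of LR-maxima recalled in the introduction. Now take two blocks $B_a$ (with left endpoint the horizontal gap $h=h_a$, next LR-max $h'$) and $B_b$ with $a<b$; pick $x\in B_a$ and $y\in B_b$, so $h<x<h'\le h_b<y$. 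Since $y$ is not an LR-maximum, there is an LR-maximum $k$ with $x<h'\le k\le h_b<y$, and $\pi_k\ge\pi_{h'}>\pi_h>\pi_x$. I claim $\pi_x>\pi_y$: if instead $\pi_x<\pi_y$, then, comparing $\pi_h,\pi_x,\pi_k,\pi_y$ in positions $h<x<k<y$ with values $\pi_k>\pi_h>\pi_x$ and $\pi_x<\pi_y<\pi_k$ (the last since $\pi_y$ is dominated by the LR-maximum $k$ sitting to its left), we need to separate two sub-cases according to whether $\pi_y$ exceeds $\pi_h$ or not. If $\pi_y<\pi_h$ then $\pi_h\pi_x\pi_k\pi_y$ is a $3142$ pattern ($\pi_x<\pi_y<\pi_h<\pi_k$, positions in order $3142$), a contradiction; if $\pi_h<\pi_y$ then, using the very first LR-maximum $i_1=1$ with value $\pi_1<\pi_x$... — here I instead use that $h$ being a horizontal gap and $h+1\notin\LR\pi$ forces $\pi_{h+1}<\pi_x$ is false in general, so the cleaner route is: take the entry immediately after $h$, call its position $x_0=h+1\in B_a$ with $\pi_{x_0}<\pi_h$, then $\pi_{x_0}\pi_x$ — rather, I would simply use $\pi_h,\pi_{x},\pi_{h'},\pi_y$: since $\pi_{h'}>\pi_h$ and we are in the case $\pi_h<\pi_y$, if also $\pi_y<\pi_{h'}$ this is $2143$ in positions $h<x<h'<y$ with values $(\pi_h,\pi_x,\pi_{h'},\pi_y)$ of pattern $2143$; and $\pi_y<\pi_{h'}$ always holds because $h'$ is an LR-maximum lying left of $y$. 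This yields the contradiction in all cases, so $\pi_x>\pi_y$, which is the desired skew-sum ordering.

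Having shown $\pi_x>\pi_y$ for all $x\in B_a$, $y\in B_b$ with $a<b$, the blocks are totally ordered in value from high to low in the order $B_1,B_2,\dots,B_g$, while by construction they already appear left to right in that order; hence the subsequence of non-LR-maxima of $\pi$ is order-isomorphic to $\beta^1\ominus\beta^2\ominus\cdots\ominus\beta^g$, which is precisely the claim, matching Figure~\ref{fig:2143_4132_decomp}. The main obstacle, and the part I would write most carefully, is the case analysis in the previous paragraph: one must be disciplined about which four positions to pick and verify that in every case the relative order of the chosen values realizes either $2143$ or $3142$, using repeatedly that an LR-maximum dominates every later entry and that LR-maxima are increasing. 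Everything else — identifying the blocks with the $\beta^i$ and reading off the skew-sum — is bookkeeping that follows directly from the definitions recalled in the introduction.
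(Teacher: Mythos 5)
Your argument is essentially the paper's: reduce the lemma to showing $\pi_x>\pi_y$ for $x$ in an earlier gap and $y$ in a later one, and derive a forbidden pattern from the negation, using Lemma~\ref{lem:2143} to place the relevant LR-maxima above both blocks; the paper does this in three lines by taking the four positions $h_a<x<h_b<y$ and observing that $\pi_{h_a}\,\pi_x\,\pi_{h_b}\,\pi_y$ would be a $3142$ if $\pi_x<\pi_y$. Two remarks on your version. First, your second sub-case ($\pi_h<\pi_y$) is vacuous: Lemma~\ref{lem:2143}, which you invoke at the outset, already gives $\pi_y<\pi_\ell\le\pi_h$ for every non-LR-maximum $y$, so only your first sub-case (the $3142$ argument) survives. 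Second, the justification you give inside that extra case --- that $\pi_y<\pi_{h'}$ ``because $h'$ is an LR-maximum lying left of $y$'' --- is not a valid principle: a non-LR-maximum is only guaranteed to be dominated by the \emph{largest} LR-maximum to its left (in $132$, the final entry $2$ exceeds the LR-maximum $1$ to its left). The conclusion you need there happens to be true via Lemma~\ref{lem:2143}, so the proof is repairable, but as written that step is unjustified; using Lemma~\ref{lem:2143} consistently collapses your case analysis to exactly the paper's argument.
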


\begin{figure}
\begin{center}
\begin{tikzpicture}[scale=0.25]

\begin{scope}[shift={(0,0)}]
\draw[very thick] (-1.5,0) -- (.5,15);
\node at (1,16) {$\times$};

\draw[fill = blue!30!white] (2,11) rectangle (6,15);
\node at (4,13) {$\beta^1$};

\draw[very thick] (6.5,16) -- (8.5,16.5) node[pos=1.3] {$\times$};

\draw[fill = blue!30!white] (10,7) rectangle (14,11);
\node at (12,9) {$\beta^2$};

\draw[very thick] (14.5,16) -- (16.5,16.5) node[pos=1.3] {$\times$};

\draw[fill = blue!30!white] (22,0) rectangle (26,4);
\node at (24,2) {$\beta^g$};

\node at (19,6) {\Huge $\ddots$};
\node at (21,18) {\Huge $\ldots$};

\draw[very thick] (27,18) -- (29,18.5);

\draw[very thick] (-2,-1) -- (31,-1);
\draw(1,-0.5) -- (1,-1.5) node[below] {$\ell$};
\end{scope}
\end{tikzpicture}
\caption{The decomposition of $\pi\in\Av(2143,3142)$. Each $\beta^{i}$, along with the leading maxima that lie between the values of $\beta^{i}$, is an arbitrary element of $\Av(2143,3142)$.}
\label{fig:2143_4132_decomp}
\end{center}
\end{figure}
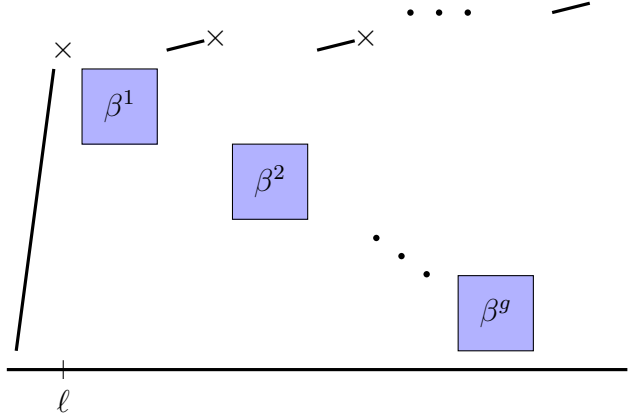

\begin{proof}
Let $i<j$ be horizontal gaps.  If $x$ is a position in the $i$th gap and $y$ is a position in the $j$th gap, then it will suffice to show that $\pi_x>\pi_y$.  By Lemma~\ref{lem:2143} we know that both $\pi_i,\pi_j$ are larger than $\pi_x$ and $\pi_y$, as $\pi_\ell\leq \pi_i<\pi_j$, where $\ell = \lead(\pi)$.  As $\pi_{i}\pi_x\pi_{j}\pi_y$ cannot be an occurrence of 3142, this now forces $\pi_x>\pi_y$.
\end{proof}

\section{The class $\CC(254613)$} \label{sec:254613}

To show that the permutations $\pi\in \CC(254613) = \Av(2143,3142, 254613)$ are counted by the Schr\"oder numbers, we consider three cases depending on the number of horizontal gaps in $\pi$.  In each case, we give a  decomposition of $\pi$, and together, these translate into a functional equation we then solve.  To begin, let us fix $\pi\in \CC(254613)$ and $\ell = \lead(\pi)$.

\bigskip

\texttt{Case 1:} $\pi$ has no horizontal gaps.  

\medskip

In this case $\pi = 1\dots n$ and $\ell = n$.  
\qed

\bigskip

\texttt{Case 2:} $\pi$ has exactly 1 horizontal gap.

\medskip

Any permutation with exactly 1 horizontal gap must be of the form

\begin{equation}\label{eq:254613-case2}
\left(1\ominus_{\ell-1} \beta\right) \oplus 1\ldots m,
\end{equation}
for some permutation $\beta$.  Moreover, it follows from Lemma~\ref{lem:extraction}  that any permutation constructed in this way where $\beta\in\CC_{\geq \ell}(254613)$ and $\lead(\beta)<\ell$ is an element of $\CC(254613)$ with exactly one horizontal gap.  
\qed

\bigskip

\texttt{Case 3:} $\pi$ has at least 2 horizontal gaps.

\medskip

In this case, let us concentrate on the rightmost horizontal gap in Figure~\ref{fig:2143_4132_decomp} and in particular the block $\beta^g$ and its corresponding LR-maxima.  We claim that $\pi$ further decomposes as in Figure~\ref{fig:case3} where $\alpha = \beta^1\ominus\ldots\ominus\beta^{g-1}$ and $\beta^g$ itself decomposes as a skew-sum of   permutations in $\CC(254613)$. We refer to these permutations as \emph{blocks}.    To prove this claim, we now make use of the fact that $\pi$ avoids 254613.  In particular,  consider indices $x<\ell<y<z<w$, where $y$ is the rightmost horizontal gap, $z,w\notin\LR{\pi}$, and the value of $\pi_x$ is between the values $\pi_z$ and $\pi_w$. (These indices have been marked in Figure~\ref{fig:case3} for reference.)   It will now suffice to show that $\pi_z>\pi_w$.  

As $\ell$ is a horizontal gap, $\ell$ is a descent, so $\pi_\ell> \pi_{\ell+1}$.  As $\ell<y$ are LR-maxima, we have $\pi_\ell<\pi_y$.  Moreover, Lemma~\ref{lem:2143-3142} guarantees that $\pi_{\ell+1}$ is greater than both $\pi_z$ and $\pi_w$.  Thus, the only way for 
$$\pi_x\ \pi_\ell\ \pi_{\ell+1}\ \pi_y\ \pi_z\ \pi_w$$
to not be an occurrence of 254613 is for $\pi_z>\pi_w$.   
\qed

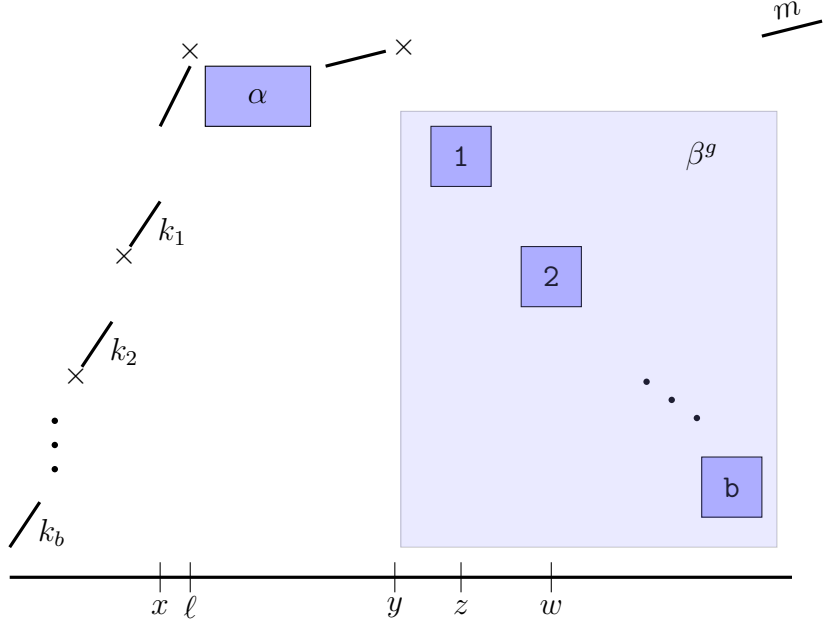
\begin{figure}[!ht]
\begin{center}
\begin{tikzpicture}[scale=0.4]

\begin{scope}[shift={(0,0)}]

\draw[fill = blue!30!white] (.5,0) rectangle (4,2);
\node at (2.25,1) {$\alpha$};
\node at (0,2.5) {$\times$};
\draw[very thick] (-1, 0) -- (0,2);
\draw[very thick] (4.5,2) -- (6.5,2.5) node[pos=1.3] {$\times$};

\begin{scope}[shift={(19,3)}]
\draw[very thick](0,0) -- (2,.5) node[midway,sloped,above] {$m$};
\end{scope}

\draw[fill = blue!30!white] (8,-2) rectangle (10,0);
\node at (9,-1) {\tt 1};

\begin{scope}[shift={(-2,-4)}]
\draw[very thick](0,0)-- node[pos=-.2]{$\times$} ++ (1,1.5);
\node at (1.4,.5) {$k_1$};
\end{scope}

\draw[fill = blue!30!white] (11,-6) rectangle (13,-4);
\node at (12,-5) {\tt 2};

\begin{scope}[shift={(-3.6,-8)}]
\draw[very thick](0,0)-- node[pos=-.2]{$\times$} ++ (1,1.5);
\node at (1.4,.5) {$k_2$};
\end{scope}

\node at (16,-8.5) {\Huge $\ddots$};
\node at (-4.5,-10) {\Huge $\vdots$};

\draw[fill = blue!30!white] (17,-13) rectangle (19,-11);
\node at (18,-12) {\tt b};

\begin{scope}[shift={(-6,-14)}]
\draw[very thick](0,0)--(1,1.5);
\node at (1.4,.5) {$k_b$};

\end{scope}

\draw[very thick] (-6,-15) -- (20,-15);
\draw (-1,-15.5) -- (-1,-14.5);
\draw (0,-15.5) -- (0,-14.5);
\draw (6.8,-15.5) -- (6.8,-14.5);
\draw (9,-15.5) -- (9,-14.5);
\draw (12,-15.5) -- (12,-14.5);
\node at (-1,-16) {$x$};
\node at (0,-16) {$\ell$};
\node at (6.8,-16) {$y$};
\node at (9,-16) {$z$};
\node at (12,-16) {$w$};

\filldraw[fill=blue!40!white, draw=black,opacity=.2] (7,-14) rectangle (19.5,.5);
\node at (17, -1) {$\beta^g$};

\end{scope}
\end{tikzpicture}
\caption{The decomposition of $\pi\in\CC(254613)$ with at least two horizontal gaps. This is a refinement of Figure~\ref{fig:2143_4132_decomp} where $\alpha = \beta^{1}\ominus\ldots\ominus \beta^{g-1}$ and the block labeled $\beta^{g}$ in Figure~\ref{fig:2143_4132_decomp} is shown in greater detail.}

\label{fig:case3}
\end{center}
\end{figure}

To inductively build such a $\pi$ we have two choices.  If we start with a permutation $\pi'$ with at least two gaps, then we may insert a new block into the existing rightmost gap. To do this, we first construct $1\oplus \pi'$ and then insert any element of $\CC(254613)$ as the $\mathtt{(b+1)}$-st block so that it lies southeast of the existing blocks and below the 1 in $1\oplus \pi'$.  (The insertion of the 1 is required to ``separate'' the new block from the exisiting blocks.)  We then take the resulting permutation and direct-sum it with an increasing sequence of length $k_{b+1}$.  We will refer to this construction as {\tt block insertion}.

 On the other hand, if the permutation $\pi'$ we start with has at most one horizontal gap, then we may ``append" a new horizontal gap as follows.  First construct 
$$\pi''=(\pi'\oplus 1)\ominus \beta^1$$
and then add an initial increasing sequence of length $k_1$ and a trailing increasing sequence of length $m$ to obtain the desired permutation
$$(1\ldots k_1)\oplus \pi''\oplus 1\ldots m.$$  We refer to this construction as {\tt gap insertion}.


At first consideration it might seem peculiar that we require permutations to have at least two horizontal gaps for {\tt block insertion}.  Without this restriction, though, not all permutations would have a unique construction.  For example, the permutation 243156 would arise via {\tt Case 2} as  
$$243156 = (1\ominus_1 231) \oplus 12,$$ 
and via block insertion in {\tt Case 3} applied to $13245$. 

We now translate these decompositions into a functional equation.  First, set
$$A_1(t,x) = \sum_{\pi \in \CC(254613)} x^{|\pi|}t^{\lead(\pi)}.$$
It now follows from the above decomposition that this generating function satisfies the following functional equation:
\begin{align}\label{eq:254613-fct-eq}
A_1(t,x) &= \frac{1}{1-tx} + \frac{txE}{1-x} + \Big(A_1-\frac{1}{1-tx}\Big)\Bigg(\frac{x(B-1)}{(1-x)(1-tx)}\Bigg)\Bigg(\frac{1}{1-\frac{tx(B-1)}{1-tx}}\Bigg),
\end{align}
where 
$$E(t,x) = \frac{B-tA_1}{1-t}-\frac{1}{1-tx} \qquad \textrm{and}\qquad B = A_1(x,1).$$
To be clear, the first two terms correspond to the {\tt Case 1} and {\tt Case 2} respectively, where $E(t,x)$ encodes the  {\tt extraction} operation in {\tt Case 2}. (Subtracting $\frac{1}{1-xt}$ in the computation of $E$ guarantees that the result of {\tt extraction} has exactly one horizontal gap.)   The third term encodes {\tt gap and block insertion}. Specifically, the first factor represents permutations with at least one horizontal gap.  The second factor then encoded {\tt gap insertion}, while the third factor populates this new gap with an arbitrary number of additional blocks.  

To solve (\ref{eq:254613-fct-eq}), we use the kernel method.  Collecting terms, we obtain
\begin{align}\label{eq:254613-fct-eq-2}
\left( \frac{B t^3 x^2+B t^2 x^2-B t^2 x-B t x^2+B x-t^2 x+t-1}{(1-t) (1-x) (1-Bt x)}\right)A_\ast=\frac{xt}{1-x}\left(
\frac{B t x-B+1}{(t-1) (t x-1)}
\right)
\end{align}
where $\displaystyle A_\ast = A_1-\frac{1}{1-xt}$.  Setting our kernel to zero, we obtain
\begin{equation}\label{eq:kernel-1}
0=B t^3 x^2+B t^2 x^2-B t^2 x-B t x^2+B x-t^2 x+t-1.
\end{equation}
Explicitly solving for $t$ using a CAS leads to an intractable expression.  To avoid this problem, let $t(x)$ be the desired solution.  Now, setting $t=t(x)$ in the RHS of (\ref{eq:254613-fct-eq-2}), we see that
$0=B x t(x) -B+1$, or equivalently, that
\begin{equation}\label{eq:kernel-2}
xt(x) = \frac{B-1}{B}
\end{equation}

If we then use (\ref{eq:kernel-2}) to reduce our kernel (\ref{eq:kernel-1}), we obtain the cubic 
$$B^3 x + B^2 x^2- 3 B^2 x-B^2+B x+3 B-2=(xB-1)(B^2+(x-3)B+2),$$ whose  solution $B$ such that $g(0)=1$ is 
$$B= \frac{3-x-\sqrt{1-6 x+x^2}}{2}.$$
As $B=A_1(x,1)$, this clearly demonstrates that the $\CC(254613)$ is counted by the large Schr\"oder numbers. Note also that the kernel method solution $t=t(x)$ is then the generating function for the little Schr\"oder numbers.

\section{The classes $\CC(524361)$ and $\CC(546132)$} \label{sec:524361_546132}

As we will see, the decomposition used to enumerate $\CC(546132)$ is a ``mirror'' image of the decomposition given in the $524361$ case.  Consequently, we tackle both enumerations in this section.  As in the previous section, these decompositions strongly depend on LR-maxima.   Additionally, as the pattern $524361$ contains the subsequence $5243$ and the pattern $546132$ ends with the subsequence $6132$, the pattern $4132$ plays an important role in the decompositions in this section.  From this perspective, the characterization of permutations in $\CC(4132)=\Av(2143,3142,4132)$, given in the next lemma, provides a natural starting place.

\begin{lemma}\label{lem:char:4132}
$\CC(4132)$ is precisely the set of permutations $\pi$ such that deletion of their leading maxima results in an element of $\Av(132)$.
\end{lemma}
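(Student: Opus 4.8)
The plan is to prove the set equality by double inclusion, exploiting the decomposition from Lemma~\ref{lem:2143-3142}. Write $\pi\in\Av_n(2143,3142)$ with horizontal gaps $i_1<\cdots<i_g$ and let $\beta=\beta^1\ominus\cdots\ominus\beta^g$ be the permutation obtained by deleting all the LR-maxima of $\pi$, as in Figure~\ref{fig:2143_4132_decomp}. The claim is that $\pi$ avoids $4132$ if and only if $\beta$ avoids $132$. First I would record the easy direction: if $\pi$ avoids $4132$, then $\beta$, being (order-isomorphic to) a subsequence of $\pi$, avoids $4132$; but $\beta$ also avoids $2143$ and $3142$, and one checks directly that $\Av(2143,3142,4132)\cap\{\text{patterns}\}$ forces $132$-avoidance on a permutation with no LR-maxima deleted — more simply, any occurrence of $132$ inside $\beta$ can be prepended with a suitable LR-maximum of $\pi$ to produce a $4132$. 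Concretely, if $\beta_a\beta_b\beta_c$ (at positions $p_a<p_b<p_c$ in $\pi$) is an occurrence of $132$, pick the LR-maximum $\pi_{i_j}$ governing the block containing position $p_a$; since every $\beta^k$-value lies below its overseeing leading/horizontal-gap maxima and the blocks are arranged in decreasing ``staircase'' fashion, $\pi_{i_j}$ is larger than all of $\beta_a,\beta_b,\beta_c$, so $\pi_{i_j}\,\beta_a\,\beta_b\,\beta_c$ is a $4132$. Hence $\pi\in\CC(4132)$ implies $\beta\in\Av(132)$.

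For the converse, suppose the deletion of the leading maxima of $\pi$ — wait, here I must be careful about the precise wording: the lemma says ``deletion of their leading maxima,'' but in context (given the Remark after Lemma~\ref{lem:2143}, and that $\ell$ is itself a horizontal gap) the relevant object is the permutation $\beta$ obtained by deleting all LR-maxima, and the blocks $\beta^i$ together with intervening leading maxima form the arbitrary $\Av(2143,3142)$ pieces. I would first reconcile this: by Lemma~\ref{lem:2143}, all values at non-LR-maximum positions lie below $\pi_\ell$, so deleting leading maxima versus deleting all LR-maxima differ only by the increasing run of values $\pi_\ell,\ldots,n$ sitting at LR-maximum positions $\geq\ell$; since an increasing sequence of large values appended in a staircase cannot create a $132$ when the rest already avoids $132$, the two formulations are equivalent for the purpose of $132$-avoidance. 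Then, assuming $\beta\in\Av(132)$, I would suppose toward a contradiction that $\pi$ contains $4132$, say at positions $q_1<q_2<q_3<q_4$ with $\pi_{q_1}>\pi_{q_3}>\pi_{q_4}>\pi_{q_2}$. The ``4'' is the large element; the ``1'', ``3'', ``2'' pattern among $\pi_{q_2},\pi_{q_3},\pi_{q_4}$ is itself a $132$. If all three of $q_2,q_3,q_4$ are non-LR-maxima, they give a $132$ in $\beta$ (modulo the reconciliation above), contradiction. So at least one of them is an LR-maximum. I would then eliminate these cases one at a time using the structural facts: $q_2$ cannot be an LR-maximum because $\pi_{q_2}$ is the smallest of the four and $q_2>q_1$ with $\pi_{q_1}$ huge; $q_4$ being an LR-maximum would force $\pi_{q_4}>$ everything before it, contradicting $\pi_{q_3}>\pi_{q_4}$ with $q_3<q_4$; and $q_3$ being an LR-maximum, with $q_2<q_3$ a non-LR-maximum sitting in some block, forces (via Lemma~\ref{lem:2143-3142} and the staircase ordering) $\pi_{q_4}$ — which is at a non-LR-maximum position after $q_3$ — to lie strictly below the block of $q_2$ or within the same block, and in either subcase one produces either a $132$ in $\beta$ or a $2143$/$3142$ in $\pi$, contradiction.

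The main obstacle, and where I would spend the most care, is the case analysis in the converse direction — specifically pinning down exactly which of $q_2,q_3,q_4$ can be an LR-maximum and, when $q_3$ is, tracking the block membership of $q_2$ and $q_4$ precisely enough (using the decreasing staircase of blocks guaranteed by Lemma~\ref{lem:2143-3142}) to derive a forbidden small pattern. A clean way to organize this is: let $j$ be minimal such that $q_j$ is an LR-maximum (it exists by the above), observe $j\neq 2$, and then note that for $j\in\{3,4\}$ the element $\pi_{q_1}$ together with $\pi_{q_j}$ already violates something — since $q_1<q_j$ and $q_j$ an LR-maximum would need $\pi_{q_j}>\pi_{q_1}$, but if $j=3$ then $\pi_{q_1}>\pi_{q_3}=\pi_{q_j}$, contradiction immediately; if $j=4$ then $q_1,q_2,q_3$ are all non-LR-maxima and $\pi_{q_2},\pi_{q_3},\pi_{q_4}$ with $\pi_{q_4}$ at an LR-max position fails because $\pi_{q_4}<\pi_{q_3}$ while $q_3<q_4$. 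This shows no $q_j$ ($j\geq 2$) can be an LR-maximum, so all of $q_2,q_3,q_4$ are non-LR-maxima, the reconciliation step applies, and we get our $132$ in $\beta$, contradiction. I expect the write-up to be short once the reconciliation between ``leading maxima'' and ``all LR-maxima'' is stated cleanly, since that is really the only subtle point beyond bookkeeping.
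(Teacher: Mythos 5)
Your argument is framed from the outset inside $\Av(2143,3142)$: you fix $\pi\in\Av_n(2143,3142)$, invoke the block decomposition of Lemma~\ref{lem:2143-3142}, and prove that such a $\pi$ avoids $4132$ if and only if the deletion avoids $132$. But the lemma asserts an equality between $\CC(4132)$ and the set of \emph{all} permutations whose leading-maxima deletion lies in $\Av(132)$; that right-hand side is not a priori contained in $\Av(2143,3142)$. What you establish is the inclusion of $\CC(4132)$ in that set, plus the weaker converse ``if $\pi$ avoids $2143$ and $3142$ and the deletion avoids $132$, then $\pi$ avoids $4132$.'' The missing piece is that a permutation containing $2143$ or $3142$ must also exhibit a $132$ after its leading maxima are deleted. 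This is exactly how the paper argues: in any occurrence of $2143$, $3142$, or $4132$, only the first entry can be a leading maximum (the second and fourth entries are dominated by earlier entries of the occurrence, and the third cannot be a leading maximum without forcing the second to be an LR-maximum), and the last three entries form a $132$. The omission matters because the lemma is later used in precisely the direction you skip: in Lemma~\ref{lem:description_of_simples} the containment $\T\subset\CC(4132)$ is deduced for permutations built from $132$-avoiders by inserting leading maxima, and there one needs the deletion condition to force avoidance of $2143$ and $3142$, not merely of $4132$.

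A secondary soft spot is your ``reconciliation'' between deleting only the leading maxima and deleting all LR-maxima. The justification ``an increasing sequence of large values appended in a staircase cannot create a $132$'' is false in general: a large value can serve as the $3$ of a $132$ whose $1$ and $2$ are smaller values straddling it positionally (as in $132$ itself). Ruling this out here needs the two structural facts you have available but do not deploy at that point: the entries of a single block occupy consecutive positions, so no LR-maximum separates two of them, while entries of distinct blocks decrease from one block to the next by Lemma~\ref{lem:2143-3142}; together these show that no increasing pair of non-LR-maximum values can straddle a non-leading LR-maximum. Both issues are avoidable: the paper's proof works directly with occurrences of the forbidden patterns and of $132$ (showing the first entry of any $132$ in a member of $\CC(4132)$ is a leading maximum), never invokes Lemma~\ref{lem:2143-3142}, and needs no reconciliation step. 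Your case analysis showing that the last three entries of a $4132$ occurrence are never LR-maxima is correct and in fact needs none of the block machinery either; the real work you still owe is the $2143$/$3142$ half of the converse.
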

\begin{proof}
For concreteness, let $\Y$ be the set of permutations $\pi$ with the property that deleting $\pi_i$ for all leading maxima $i$ gives a 132-avoiding permutation.

Fix $\pi\notin \Av(2143,3142,4132)$. Then one of the three prohibited patterns must occur in $\pi$ and, moreover, any such occurrence can include at most one LR-maximum of $\pi$.  Further, all three patterns have the property that deleting their first value results in the pattern 132.    Therefore  removing all the leading maxima in $\pi$ results in a permutation that contains an occurrence of 132. So $\pi\notin \Y$.   

To see the reverse inclusion, consider $\pi\in\Av(2143,3142,4132)$.  If $\pi\in \Av(132)\subset \Y$, we are done.  Otherwise, let $x,y,z$ 
be indices such that $\pi_x\pi_y\pi_z$ is a 132-pattern.   It will now suffice to show that $x$ is a leading maximum. To this end, let $j<i<x$.  A straightforward check reveals that $\pi_j<\pi_i<\pi_x$, as otherwise we would create either a 4132-, 3142-, or 2143-pattern.  Therefore, $\pi_1<\cdots<\pi_x$ and $x$ is a leading maximum.  
\end{proof}

Before continuing, we let
$$C(x) = \frac{1-\sqrt{1-4x}}{2x},$$
i.e., $C(x)$ is the generating function for the Catalan numbers.

\subsection{The class $\CC(524361)$}

In order to show the equality 
$$\sum_{n\geq 0} |\CC_n(524361)|\ x^n = \frac{3-x-\sqrt{1-6x+x^2}}{2},$$
we seek a natural decomposition of the permutations in $\CC_n(524361)=\Av(2143,3142,524361)$.  To begin, fix such a permutation $\pi$ and recall from Lemma~\ref{lem:2143-3142} that $\pi$ decomposes as in Figure~\ref{fig:2143_4132_decomp}. Our decomposition of $\pi$, which we describe next, is guided by the following observation.  In Figure~\ref{fig:2143_4132_decomp},  the blocks labeled $\beta^1,\ldots, \beta^{g-1}$ must be 132-avoiding permutations.  Otherwise, any block that contains a 132, along with $\pi_\ell$, the last horizontal gap, and $\beta^g$ would create an occurrence of 524361.)  Therefore the only block that may contain an occurrence of 132 is $\beta^g$.  Consequently we consider the following two cases.

\medskip

{\tt Case 1:} $\beta^g\in \Av(132)$

\medskip

In this case $\beta^1,\ldots, \beta^g\in \Av(132)$ and hence everything strictly to the left of $\ell$ in Figure~\ref{fig:2143_4132_decomp} must also avoids $132$.   Lemma~\ref{lem:char:4132} now guarantees that  $\pi\in\CC(4132)\subset \CC(524361)$.    
\qed

\bigskip

{\tt Case 2:} $\beta^g\notin \Av(132)$

\medskip

First observe that this case could only occur if we have at least one horizontal gap.  If $\pi$ has exactly one horizontal gap, then 
$$\pi = (1\ominus_i \beta) \oplus 1\ldots m,$$
where $i\leq \lead(\beta)$ and $\beta_{i+1}\beta_{i+2}\ldots\in\CC(524361)\setminus \CC(132)$.

If $\pi$ has at at least two horizontal gaps, then let  $\pi_s$ be the smallest value in $\beta^{(g-1)}$ and let $t$ be the rightmost horizontal gap in Figure~\ref{fig:2143_4132_decomp}.  Now define $\alpha$ to be everything weakly above $\pi_s$ and strictly left of $t$.  Note that this definition of $\alpha$ implies that $\alpha$ is nonempty and $\alpha_1 \neq 1$.  The same argument used in {\tt Case 1} shows that $\alpha\in \Av(4132)$.  Next, let $\beta$ be the permutation obtained from all the values in $\pi$ lying  strictly below $\pi_s$. Observe that
\begin{align}\label{eq:dec_524361}
\pi = (\alpha'\ominus_i \beta)\oplus 1\ldots m,
\end{align}
where $\alpha' = \alpha\oplus 1$, $i\leq\lead(\beta)$ and $\beta_{i+1}\beta_{i+2}\ldots\in\CC(524361) \setminus \Av(132)$. 

Lastly, a straightforward check shows that for any such $\alpha$ and $\beta$ as described the permutation constructed as (\ref{eq:dec_524361}) is in  $\CC(524361)\setminus \CC(4132)$ with at least two horizontal gaps.  
\qed

\bigskip

In order to translate this decomposition into a functional equation we again use the catalytic variable $t$ to mark the number of leading maxima.  We define 

\begin{align}
A_2(t,x)&=\sum_{\pi\in\CC(524361)}{x^{|\pi|}t^{\lead(\pi)}},\label{eq:A_2:def}\\
Y(t,x)&=\sum_{\pi\in\CC(4132)}{x^{|\pi|}t^{\lead(\pi)}},\label{eq:Y}\\
Z(t,x)&=\sum_{\substack{\pi\in\CC(4132)\\ \pi\ne\emptyset,\ \pi_1\neq 1}}{x^{|\pi|}t^{\lead(\pi)}}\label{eq:Z}.
\end{align}
Using these variables, we now obtain the functional equation
\begin{equation}\label{eq:A_2:functional}
A_2(t,x)=Y(t,x)+\frac{tx}{1-x}D(t,x)+\frac{x}{1-x}D(t,x) Z(t,x),
\end{equation}
where 
\begin{equation*}
D(t,x)=\frac{A_2(1,x)-tA_2(t,x)}{1-t}-\frac{1}{1-tx}C\left(\frac{x}{1-tx}\right).
\end{equation*}
To be clear, the first term in  $A_2(t,x)$ corresponds to {\tt Case 1} whereas the last two terms correspond to {\tt Case 2}. Specifically,  the second term counts all $\pi\in \CC(524361)\setminus \Av(132)$ with exactly one horizontal gap and the third term counts all those with at least two horizontal gaps.  In this last term, the fact that $\alpha$ in the decomposition is nonempty and such that $\alpha_1 \neq 1$ is mirrored by the presence of $Z(t,x)$ in this final term, instead of $Y(t,x)$.  Lastly, $D(t,x)$ represents all the ways to perform {\tt extraction} on the elements in $\CC(524361)\setminus \Av(132)$.    

We now show that the large Schr\"oder numbers are the unique solution to the functional equation (\ref{eq:A_2:functional}).  A key ingredient needed in order to work out the algebra is the next lemma, whose proof we postpone to Subsection~\ref{sec:4132}.

\begin{lemma}\label{lem:enum_Y_Z}
The generating function $Y(t,x)$ and $Z(t,x)$ are given by the following expression 
\begin{equation} \label{eq:Y(t,x)}
Y(t,x)=\frac{1-tx+(tx-x)C^*}{(1-xC^*)(1-tx)},
\end{equation}
and 
\begin{equation} \label{eq:Z(t,x)}
Z(t,x)=\frac{tx(C^*-1)}{1-xC^*},
\end{equation}
where $C^* = C\left(\frac{x}{1-tx}\right)$.
\end{lemma}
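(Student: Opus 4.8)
The plan is to prove Lemma~\ref{lem:enum_Y_Z} by first setting up a combinatorial decomposition for permutations in $\CC(4132)$ that tracks leading maxima, then translating it into a functional equation in the catalytic variable $t$, and finally solving that equation. By Lemma~\ref{lem:char:4132}, an element $\pi\in\CC(4132)$ is obtained from a $132$-avoiding permutation $\sigma$ by inserting a sequence of leading maxima. More precisely, I would decompose $\pi$ according to its largest leading maximum $\ell=\lead(\pi)$: either $\pi=1\ldots n$ (all of $\pi$ is leading maxima), or $\pi\neq 1\ldots n$, in which case by the Remark following Lemma~\ref{lem:2143} the index $\ell$ is a horizontal gap, and Lemma~\ref{lem:2143-3142} together with Lemma~\ref{lem:char:4132} tells us that everything strictly to the right of position $\ell$ (below the value $\pi_\ell$) forms, after deleting its own leading maxima, a $132$-avoiding permutation, while everything weakly left of $\ell$ consists entirely of the $\ell$ leading maxima. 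The key structural point is that a permutation counted by $Y$ is built from: (i) a block of $\ell\ge 1$ initial leading maxima, and (ii) a $132$-avoiding "tail" into which further leading maxima have been threaded, each such leading maximum sitting in one of the "gaps" of the $132$-avoiding skeleton and taking a value larger than everything to its left.

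Concretely, I would use the standard fact that a $132$-avoiding permitation of length $k$ has $k+1$ "slots" and that threading a single larger-value leading maximum into a slot, then continuing, is governed by composition with $C(x)$; the substitution $C^* = C\!\left(\frac{x}{1-tx}\right)$ already appears in $D(t,x)$ and signals that each "atom" of the $132$-avoiding skeleton gets replaced by itself possibly preceded by a geometric string of leading maxima — the $\frac{x}{1-tx}$ being "one $132$-atom optionally preceded by leading maxima (weighted $tx$ each)". So I expect the enumeration to run: let $C^\ast$ count $132$-avoiding permutations with each letter optionally shadowed by a run of preceding leading maxima; then $\frac{1}{1-xC^\ast}$ or a close variant accounts for the top-level structure, and the explicit $Y(t,x)$ formula should drop out after recognizing that the outermost run of leading maxima (the "leading block") is handled separately by a $\frac{1}{1-tx}$ factor, with the $(tx-x)C^\ast$ correction term in the numerator of (\ref{eq:Y(t,x)}) coming from reconciling the leading-block count with the internal leading maxima near the top. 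For $Z(t,x)$, the constraint $\pi\neq\emptyset$ and $\pi_1\neq 1$ means $\pi$ does not begin with a fixed-point-like leading $1$; the cleanest route is $Z = Y - (\text{permutations with }\pi_1=1) - 1$, where permutations counted by $Y$ with $\pi_1=1$ are exactly $1\oplus(\text{something in }\CC(4132))$ shifted, giving a relation like $Z(t,x) = Y(t,x) - 1 - x\cdot(\text{a }Y\text{-like series})$; matching this against (\ref{eq:Z(t,x)}) reduces to algebra once $Y$ is known.

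The main obstacle I anticipate is getting the bookkeeping of leading maxima exactly right — in particular, distinguishing the "leading block" of the whole permutation from leading maxima that get absorbed into the $132$-skeleton, and making sure each leading maximum is counted once and with the correct power of $t$. The subtlety is that when we insert a leading maximum into the $132$-avoiding tail, it may or may not become a leading maximum of the full permutation $\pi$ depending on whether everything before it is increasing; Lemma~\ref{lem:char:4132} guarantees that all the inserted values together with the initial block are precisely $\LR{\pi}$, but threading the $t$-weights through the $C$-substitution correctly requires care. I would verify the final formulas by expanding (\ref{eq:Y(t,x)}) and (\ref{eq:Z(t,x)}) as power series in $x$ (with $t$ formal) through order $x^3$ or $x^4$ and checking against direct enumeration of small cases of $\CC(4132)$ refined by $\lead$, since, as the authors note in the introduction, any solution of such a recursively-structured functional equation is pinned down by its first few Taylor coefficients.

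Once Lemma~\ref{lem:enum_Y_Z} is established, substituting (\ref{eq:Y(t,x)}) and (\ref{eq:Z(t,x)}) into (\ref{eq:A_2:functional}) yields a functional equation for $A_2(t,x)$ of exactly the same shape as (\ref{eq:254613-fct-eq}), which is then solved by the kernel method: set the coefficient of $A_2(t,x)$ to zero to get a kernel equation relating $t$ and $x$, extract the relation the root must satisfy, substitute back to eliminate $t$, and read off that $A_2(1,x) = \frac{3-x-\sqrt{1-6x+x^2}}{2}$ as the branch with constant term $1$. By the remark in the introduction, exhibiting the large Schröder generating function as a solution suffices — no separate uniqueness argument is needed — so the only genuinely new content beyond routine kernel-method manipulation is Lemma~\ref{lem:enum_Y_Z} itself.
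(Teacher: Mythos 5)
You have assembled the right ingredients --- Lemma~\ref{lem:char:4132}, the catalytic variable $t$ for leading maxima, the substitution $C^*=C\bigl(\tfrac{x}{1-tx}\bigr)$ as ``a $132$-atom together with a geometric run of $tx$-weighted leading-maximum values threaded below it,'' and the relation $Z=(1-tx)Y-1$ coming from prepending a $1$ --- and these are exactly the ingredients the paper uses. But the proposal stops short of the one step that actually constitutes the proof: it never states a decomposition from which the formula for $Y$ follows. The paper does not write $Y$ down as a direct product; it splits $\CC(4132)$ by the number of horizontal gaps ($0$, exactly $1$, at least $2$), expresses the last two cases via the extraction operation as $(1\ominus_i\beta)\oplus 1\ldots m$ and $(\alpha\ominus_i\beta)\oplus 1\ldots m$ with $\beta_{i+1}\beta_{i+2}\ldots\in\Av(132)$ and $\alpha\in\CC(4132)$ non-increasing, and obtains the \emph{linear functional equation}
\[
Y=\frac{1}{1-tx}+\frac{tx\,(C^*-1)}{(1-tx)(1-x)}+\frac{x}{1-x}\Bigl(Y-\frac{1}{1-tx}\Bigr)(C^*-1),
\]
whose solution is \eqref{eq:Y(t,x)}. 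The denominator $1-xC^*$ in your target formula is produced by solving this recursion (equivalently, by iterating the $\ge 2$-gap case into a sequence of skew-summed blocks separated by single LR-maxima); it is not the ``top-level $\frac{1}{1-xC^*}$ factor'' of a one-shot product, and the $(tx-x)C^*$ numerator term is likewise an artifact of the algebra, not a separately identifiable correction.

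The place where your direct approach would actually fail is the bookkeeping issue you yourself flag, and it is worth naming precisely. All leading maxima of $\pi$ sit at positions $1,\dots,\ell$; there are no ``further leading maxima threaded into the tail,'' and the tail $\pi_{\ell+1}\cdots\pi_n$ \emph{is} the $132$-avoider of Lemma~\ref{lem:char:4132} (its internal LR-maxima are just ordinary entries of it, weighted $x$). What gets threaded are the \emph{values} of the $\ell$ leading maxima among the values of the tail, and the lone global constraint $\pi_{\ell+1}<\pi_\ell$ couples the interleaving to the first entry of the tail, which is exactly why a naive product $\frac{1}{1-tx}\cdot(\text{tail})$ over-counts and why the paper reorganizes the count by horizontal gaps instead. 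Checking Taylor coefficients against brute-force enumeration, as you propose, would confirm the answer but is not a proof of the lemma; to close the gap you need to write down the gap-based decomposition (or an equivalent bijective one) and derive the functional equation above.
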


Letting $A_2=A_2(t,x)$ and $B=A_2(1,x)$, we obtain, after a bit of algebra, 
\begin{equation} \label{eq:A-ker}
(tx-1)(t^2x+(t-1)xC^*-(t-1))A_2=(t-1)xC^*-(tx-1)(txB-(t-1)).
\end{equation}

Applying the kernel method, we set
\[
(tx-1)(t^2x+(t-1)xC^*-(t-1))=0.
\]
Letting $tx-1=0$ in the right-hand side of \eqref{eq:A-ker} yields $xC^*=0$, which is impossible. Thus, $t=t(x)$ is a solution of
\begin{equation} \label{eq:ctx}
t^2x+(t-1)xC^*-(t-1)=0.
\end{equation}
Now observe that the right-hand side of \eqref{eq:A-ker} is
\[
(t-1)xC^*+t^2x-t+1+(tx-1)txB-tx=0,
\]
so by \eqref{eq:ctx} we get
\begin{equation} \label{eq:atx}
B=\frac{1}{1-tx}.
\end{equation}
Solving \eqref{eq:ctx} for $t$, we see that the only such solution is
\[
t=\frac{1+x-\sqrt{1-6x+x^2}}{4x},
\]
which is, incidentally, the ordinary generating function for the little Schr\"oder numbers. This implies that
\[
B=\frac{1}{1-tx}=\frac{3-x-\sqrt{1-6x+x^2}}{2},
\]
and thus $\CC(524361)$ is enumerated by the large Schr\"oder numbers.

\subsection{The class $\CC(546132)$}

The similarities between this class and $\CC(524361)$ are immediately apparent. To see this, we begin, as usual, by considering Figure~\ref{fig:2143_4132_decomp}.  Provided $g\geq 2$, we claim that $\beta^2,\ldots,\beta^g\in \Av(132)$.  If not, then for some $1<i$, $\beta^i$ contains an occurrence of 132.  Setting $\ell$ and $t$ to be the first and second horizontal gaps respectively,  we see that $\pi_\ell$, any value in $\beta^1$, $\pi_t$, and $\beta^i$ would create an occurrence of 546132.  Just as in the previous section, we now have the following two cases.

\smallskip
{\tt Case 1: $\beta^1\in \Av(132)$}

\smallskip

As in {\tt Case 1} in the previous section, $\pi\in \CC(4132)\subset\CC(546132)$. \qed

\medskip

{\tt Case 2: $\beta^1\notin \Av(132)$}

\smallskip

If $\pi$ has exactly one horizontal gap, then 
$$\pi = (1\ominus_i \beta)\oplus 1\ldots m,$$
where $i\leq \lead(\beta)$ and $\beta_{i+1}\beta_{i+2}\ldots\in\CC(546132)\setminus \Av(132)$.

If $\pi$ has at least two horizontal gaps, then let $\ell=\lead(\pi)$ and $t$ be the first and second horizontal gaps in $\pi$ respectively.  Let $\pi_r$ be  the largest value in the block labeled $\beta^2$  in Figure~\ref{fig:2143_4132_decomp}.  Now define $\alpha$ to be the permutation which is strictly to the left of $t$ and strictly above $\pi_r$. As  $\alpha$ has exactly one horizontal gap, then
$$\alpha = (1\ominus_i \gamma)\oplus 1\ldots m,$$
for some $\gamma$ and $i$ such that $\beta^1 = \gamma_{i+1}\gamma_{i+2}\ldots \in \CC(546132)\setminus \Av(132)$.

Further, let $\beta$ be the result of deleting the values corresponding to $\alpha$ in $\pi$.  (That is $\beta$ is the permutation defined by the values of $\pi$ which are either weakly below $\pi_r$ or weakly to the right of $t$.) As $\beta^2,\ldots,\beta^g\in\Av(132)$, Lemma~\ref{lem:char:4132} now implies that $\beta\in\CC(4132)$ with the property that $\beta$ is nonempty and $\beta_{\lead(\beta)}-1 \neq\beta_{\lead(\beta)-1}$, a point we will return to shortly.   We now see that $\pi$ is obtained by inflating the value $\beta_{\lead(\beta)}$ with  $\alpha\oplus 1$. 

Lastly, a straightforward check shows that for any such $\alpha$ and $\beta$ as just described, the permutation constructed by inflating the value $\beta_{\lead(\beta)}$ with  $\alpha\oplus 1$ is in $\CC(524361)\setminus \CC(4132)$ with at least two horizontal gaps.\qed

\bigskip

In the decomposition of $\pi$ described in {\tt Case 2}, we see that $\beta$ is a member of the following subset of $\CC(4132)$,
$$\mathcal{A}=\{\sigma\in\CC(4132)\ |\ \sigma\ne\emptyset,\ \sigma_{\lead(\sigma)}-1 \neq\sigma_{\lead(\sigma)-1} \}.$$
In order to translate the above decompositions into a functional equation, we will certainly need to enumerate this set. In fact, we have unwittingly already done so.  We know from Equation~\ref{eq:Z} that $Z(t,x)$ is the generating function for the set
$$\mathcal{B} = \{\sigma\in\CC(4132)\ |\ \sigma\ne\emptyset,\ \sigma_1 \neq 1 \},$$
and we claim that it also the generating function for $\mathcal{A}$ as well.  To see this, we construct a (length preserving) bijection between the sets $\mathcal{A}$ and $\mathcal{B}$.  Let  $\sigma\in \mathcal{A}$ and identify the largest leading maximum $i$ in $\sigma$ such that $\sigma_j = j$ for all $j\leq i$.    Now move these first $i$ leading maxima and place them immediately below $\sigma_{\ell(\sigma)}$.  This is certainly a bijection from $\mathcal{A}$ to $\mathcal{B}$ that preserves the number of leading maxima.  Thus $Z(t,x)$ counts $\mathcal{A}$ as well.  

We are now in a position to combine all these pieces into a functional equation.  Let   
\begin{align}\label{eq:A_3}
A_3(t,x)&=\sum_{\pi\in\CC(546132)}{x^{|\pi|}t^{\lead(\pi)}}.
\end{align}
It follows immediately from the above decomposition that 
$$A_3(t,x) = Y(t,x) + \frac{txD(t,x)}{1-x} + \frac{xD(t,x)}{1-x}Z(t,x),$$
where $Y(t,x)$ and $Z(t,x)$ are defined by (\ref{eq:Y}) and  (\ref{eq:Z}), respectively and 
$$D(t,x)=\frac{A_3(1,x)-tA_3(t,x)}{1-t}-\frac{1}{1-tx}C\left(\frac{x}{1-tx}\right).$$  
In particular, the first term encodes {\tt Case 1}, whereas the second term encodes {\tt Case 2} for exactly one horizontal gap, and the third term encodes this case for at least two horizontal gaps.  Finally, we would draw the reader's attention to the absence of the factor of $t$ in the numerator of the third term.  This corresponds to the fact that when we inflate $\beta$  with $\alpha$ the leading maximum corresponding to $\ell(\beta)$ in the resulting permutation is no longer a leading maximum.

  As this is the same functional equation as in (\ref{eq:A_2:functional}), we immediately conclude that $\CC(546132)$ is counted by the large Schr\"oder numbers.

\subsection{The class $\CC(4132)$}\label{sec:4132}
In order to complete our enumeration of the classes $\CC(524361)$ and $\CC(546132)$, it only remains to prove Lemma~\ref{lem:enum_Y_Z}.  We do precisely that in this section.  To aid the reader, recall that 
$$
Y(t,x)=\sum_{\pi\in\CC(4132)}{x^{|\pi|}t^{\lead(\pi)}}\qquad\textrm{and}\qquad
Z(t,x)=\sum_{\substack{\pi\in\CC(4132)\\ \pi\ne\emptyset,\ \pi_1\neq 1}}{x^{|\pi|}t^{\lead(\pi)}}.$$
Moreover, Lemma~\ref{lem:enum_Y_Z} stated that 
$$
Y(t,x)=\frac{1-tx+(tx-x)C^*}{(1-xC^*)(1-tx)}\qquad\textrm{and}\qquad
Z(t,x)=\frac{tx(C^*-1)}{1-xC^*},$$
where $C^* = C\left(\frac{x}{1-tx}\right)$.

\begin{proof}[Proof of Lemma~\ref{lem:enum_Y_Z}]

Our enumeration of the class $\CC(4132)$ will employ techniques similar to those in the previous two subsections. Consequently, we will be brief.  

For any $\pi\in \CC(4132)$, we (again) consider the number of horizontal gaps in $\pi$. If $\pi$ has no horizontal gaps, then $\pi$ is the increasing permutation.  

If $\pi$ has exactly one horizontal gap, then it is of the form 
$$(1\ominus_i \beta)\oplus 1\ldots m,$$
where $i<|\beta|$ and $\beta_{i+1}\beta_{i+2}\ldots \in \Av(132)$.  Such permutations are counted by
$$\frac{tx \left(C\left(\frac{x}{1-tx}\right)-1\right)}{(1-tx)(1-x)},$$
where $C(x)$ is the generating function for the Catalan numbers (and the set $\Av(132)$).  Note that the construction   
$\displaystyle C\left(\frac{x}{1-tx}\right)$ effectively counts the number of additional LR-maxima inserted \emph{below} each row of a 132-avoiding permutation.  

Lastly, if $\pi$ has at least two horizontal gaps, then it decomposes as
$$(\alpha\ominus_i \beta)\oplus 1\ldots m,$$
where $\alpha$ is not increasing (so as to have at least one horizontal gap),  $\alpha\in \CC(4132)$,  and $i<|\beta|$, and $\beta_{i+1}\beta_{i+2}\ldots \in \Av(132)$.  It readily follows that such permutations are counted by
$$\frac{x}{1-x}\left(Y(t,x)-\frac{1}{1-tx}\right) \left(C\left(\frac{x}{1-tx}\right)-1\right).$$

Combining these cases yields the following functional equation:
$$
Y(t,x)=\frac{1}{1-tx}+
\frac{tx (C^*-1)}{(1-tx)(1-x)}+
\frac{x}{1-x}\left(Y(t,x)-\frac{1}{1-tx}\right) (C^*-1),
$$
whose solution is the desired equation.  Finally, as 
$$Z(t,x) = (1-tx) Y(t,x)-1,$$
we easily obtain our expression for $Z(t,x)$.  
\end{proof}

Before concluding this subsection, we note that letting $t=1$ we obtain
\[
Y(1,x)=\frac{1}{1-xC^*}=\frac{2}{1+x+\sqrt{(1-x)(1-5x)}},
\]
in other words, $|\CC_n(4132)|=\href{http://oeis.org/A033321}{\mathrm{A033321}}(n)$, see \cite{Sloane}.

\section{The class $\CC(263514)$} \label{sec:263514}

We now turn our attention to our last remaining class.  The techniques needed to enumerate $\CC(263514) = \Av(2143,3142, 263514)$ are not the same as those used in the previous sections but instead mirror those used by Burstein and Pantone in~\cite{BP}.  As mentioned in the introduction, we make use of simple permutations and the inflation construction.  For readers unfamiliar with these notions, we pause to define these terms.  

First define $[i,j] = \{i,i+1,\ldots, j\}$, for $i\leq j$.  We say $[i,j]$ is an \emph{interval} of length $j-i+1$ in a permutation $\sigma$ provided the values $\{\sigma_i,\ldots, \sigma_j\} = [a,b]$, for some $a\leq b$.  A permutation $\sigma$ is \emph{simple} provided that its only intervals have length $1$ or $n$.  Lastly, if $A$ is any set of permutations, then we denote by $\Si(A)$ the subset of all simple permutations in $A$.

Observe that the permutation $315462$ is not simple because it contains the interval $[3,5]$. That said, if we ``deflate" the corresponding string of values $546$ into a $4$ we obtain the simple permutation $3142$.   This idea motivates our next definition.   If $\sigma$ is any permutation of length $k$ and $\{\rho^{(i)}\}_{i=1}^k$ is a sequence of nonempty permutations, then the \emph{inflation} of $\sigma$ by $\{\rho^{(i)}\}_{i=1}^k$, written as $\sigma[\rho^{(1)}, \rho^{(2)}, \ldots, \rho^{(k)}]$, is the permutation of length $|\rho^{(1)}| + \cdots + |\rho^{(k)}|$ such that each entry $\sigma_i$ is replaced by the permutation $\rho^{(i)}$. For example,

\begin{center}
\begin{tikzpicture}[scale=.5,baseline=(current bounding box.center)]
\fill[blue!10!white] (0,4) rectangle (3,7);
\fill[blue!10!white] (3,3) rectangle (4,4);
\fill[blue!10!white] (4,7) rectangle (6,9);
\fill[blue!10!white] (6,0) rectangle (9,3);

\draw[step=1cm,lightgray,very thin] (0,0) grid (9,9);

\node at (-5,5) {$3241[123,1,12,123]\ =$};
\node at (0.5,4.5) {$\times$};
\node at (1.5,5.5) {$\times$};
\node at (2.5,6.5) {$\times$};
\node at (3.5,3.5) {$\times$};
\node at (4.5,7.5) {$\times$};
\node at (5.5,8.5) {$\times$};
\node at (6.5,0.5) {$\times$};
\node at (7.5,1.5) {$\times$};
\node at (8.5,2.5) {$\times$};

\end{tikzpicture}.
\end{center}

To guide the reader, we divide the remainder of this section into two parts.  In the first part, we characterize the simple permutations $\CC(263514)$ in terms of $132$-avoiding permutations.  We then demonstrate how the simples may be inflated to obtain arbitrary elements of $\CC(263514)$.  In the second part, we translate these descriptions into functional equations whose unique solution, we show, is the large Schr\"oder numbers.  

\subsection{The Simples}

In this section we show the unexpected result that the simples in $\CC(263514)$ are essentially  132-avoiding permutations with LR-maxima  strategically inserted to ``break up" intervals.    As this description is quite reminiscent of Lemma~\ref{lem:char:4132}, the statement of our next lemma is quite natural.  

\begin{lemma} \label{lem:263514-simples}
 $\Si(\CC(263514))=\Si(\CC(4132))$.
\end{lemma}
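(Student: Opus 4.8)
The plan is to show both inclusions by exploiting the fact that $2143$, $3142$, and $4132$ all have $132$ as the pattern obtained by deleting their first entry, whereas $263514$ contains $2143$ (indeed $2\,6\,3\,5\,1\,4$ restricted to positions $1,3,5,6$ gives $2314$... let me instead recall that $263514$ contains $3142$ via positions $2,3,5,6$ giving $6354 \sim 3142$, and also contains $2143$). The key structural input is Lemma~\ref{lem:char:4132}: a permutation lies in $\CC(4132)$ exactly when deleting its leading maxima yields a $132$-avoider. So I would first prove the easy inclusion $\Si(\CC(4132))\subseteq\Si(\CC(263514))$: since $|263514|=6$ and any occurrence of $263514$ would, after deleting its (at most one) leading maximum of the ambient permutation, leave an occurrence of $132$, a permutation whose leading-maxima-deletion avoids $132$ cannot contain $263514$; hence $\CC(4132)\subseteq\CC(263514)$, and intersecting with simples gives the inclusion. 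The only subtlety is checking that an occurrence of $263514$ inside $\pi$ meets $\LR(\pi)$ in at most one position and that deleting that position from the occurrence leaves a $132$; this is the same bookkeeping as in the proof of Lemma~\ref{lem:char:4132}, so I would state it as an immediate analogue.

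For the reverse inclusion $\Si(\CC(263514))\subseteq\Si(\CC(4132))$, I would argue contrapositively: take a simple $\sigma\in\CC(263514)$ and suppose $\sigma\notin\CC(4132)$, i.e. $\sigma$ contains $4132$ (it already avoids $2143,3142$). The goal is to build an occurrence of $263514$, using simplicity of $\sigma$ to produce the two ``extra'' entries. Write the $4132$ occurrence as $\sigma_a\sigma_b\sigma_c\sigma_d$ with $a<b<c<d$ and $\sigma_b<\sigma_c<\sigma_a<\sigma_d$ (wait — in $4132$ the values are $4,1,3,2$, so $\sigma_b<\sigma_d<\sigma_c<\sigma_a$). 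The pattern $263514$ should be thought of as $4132$ with a small value prepended (the $2$) — more precisely $263514$ restricted to positions $\{1,2,4,6\}$ is $2\,6\,5\,4\sim2\,4\,3\,2$? Let me just say: $263514$ contains $4132$ and has two further entries (a ``$3$'' and a ``$1$'') that one needs to locate. The strategy is: because $\sigma$ is simple, the set of positions/values forming the $4132$ is not an interval, so there is an entry of $\sigma$ that ``separates'' part of it in position or value; combined with the constraint that $\sigma$ still avoids $2143$ and $3142$, the only consistent placement of such separating entries produces exactly the pattern $263514$. I would make this precise by a short case analysis on where a witness to non-simplicity can sit relative to the four chosen positions.

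The step I expect to be the main obstacle is the reverse inclusion: turning ``simple $+$ contains $4132$'' into ``contains $263514$.'' The difficulty is that simplicity only guarantees \emph{some} interval-breaking entry exists, and one must argue that a suitable such entry can be chosen so that the resulting six-element subsequence is order-isomorphic to $263514$ and not to something else — all while respecting the ambient avoidance of $2143$ and $3142$, which severely restricts the relative order of entries (via Lemmas~\ref{lem:2143} and~\ref{lem:2143-3142} and the decomposition in Figure~\ref{fig:2143_4132_decomp}). Concretely, I would: (i) fix a $4132$ occurrence that is minimal in some sense (e.g. the positions are as close together as possible); (ii) use minimality plus simplicity to locate an entry either strictly between two consecutive chosen positions, or with value strictly between two consecutive chosen values; (iii) check in each case, using the $(2143,3142)$-structure, that this entry together with one more forced entry extends the $4132$ to a $263514$. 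I anticipate (iii) is where most of the work lies, but it should reduce to a handful of small diagrams rather than heavy computation.
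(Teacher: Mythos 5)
Your two-inclusion strategy matches the paper's, and the forward inclusion is fine, though you are overcomplicating it: $263514$ contains $4132$ (the subsequence $6354$), so $\CC(4132)\subseteq\CC(263514)$ with no appeal to Lemma~\ref{lem:char:4132}. (Your parenthetical claim that an occurrence of $263514$ meets the leading maxima of the ambient permutation in at most one entry is also false --- both the ``$2$'' and the ``$6$'' can be leading maxima, since $26$ is an increasing prefix of the pattern --- but deleting both still leaves $3514$, which contains $132$, so that version of the argument survives.)

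The reverse inclusion is where the substance is, and your plan stops short of it in two concrete ways. First, you never identify the object to which simplicity is applied: it is not ``the set of positions/values forming the $4132$,'' but the position window $[y,w]$ spanned by the $132$ part of a suitably extremal occurrence $\sigma_x\sigma_y\sigma_z\sigma_w\sim 4132$; after $2143$/$3142$-avoidance and extremality of the occurrence have emptied the surrounding regions, this window is a nontrivial interval unless specific cells are occupied. Second, ``one more forced entry'' undercounts and misdescribes the mechanism: simplicity first forces an entry $\sigma_i$ with $z<i<w$ and $\sigma_i<\sigma_y$, and that entry alone neither breaks the interval $[y,w]$ nor yields a $263514$. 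One must take $\sigma_i$ minimal and invoke the interval obstruction a second time to force yet another entry with value in $(\sigma_i,\sigma_y)$ lying either left of $x$ or right of $w$; the right-of-$w$ option creates a $3142$ (together with $\sigma_y$, $\sigma_i$, $\sigma_w$) and is impossible, and only the left-of-$x$ option, combined with $\sigma_x\sigma_y\sigma_z\sigma_i\sigma_w$, produces the $263514$. This is precisely the region analysis of Figure~\ref{fig:CharSimples}, which the paper runs in the contrapositive direction (assuming $263514$-avoidance and deriving a forbidden interval). Without pinning down the candidate interval and this two-stage forcing, the case analysis you defer to step (iii) does not close.
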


\begin{proof}
Clearly, $\Si(\CC(4132))\subseteq\Si(\CC(263514))$, since $\CC(4132)\subseteq\CC(263514)$.  For the other inclusion, fix $\sigma\in \Si(\CC(263514))$ and assume for a contradiction that it contains an occurrence of $4132$.  Let $\sigma_x\sigma_y\sigma_z\sigma_w$ be such an occurrence and consider Figure~\ref{fig:CharSimples}.  Note that the region 1 and 3 must be empty as $\sigma$ avoids $2143$ and $3142$.  Moreover, we can also assume that regions 2, 4, and 6 are also empty.  For example, if region 4 is not empty, then redefine $\sigma_x$ to be the smallest value in region 4.  Observe that if region 5 is empty then the $[y,w]$ is a nontrivial interval in $\sigma$, which is impossible.  Therefore, region 5 must contain some smallest value $\sigma_i$.  In this case we obtain the same contradiction, i.e., $[y,w]$ must be a nontrivial interval in $\sigma$.    This follows since regions 7 and 8 above $\sigma_i$ must be empty as $\sigma$ avoids 263514 and 3142.  We therefore conclude that $\sigma\in \CC(1432)$ as desired.  
\end{proof}

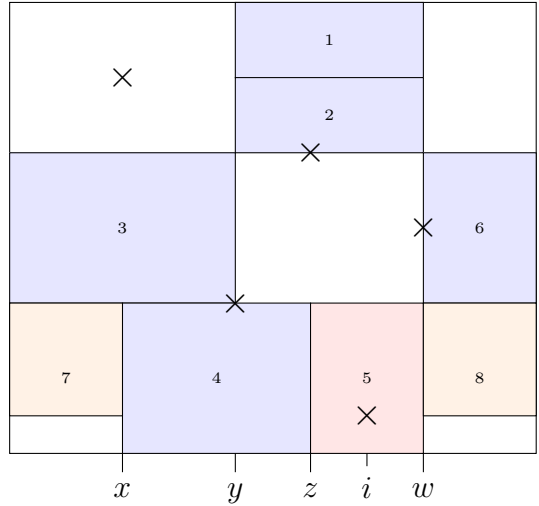
\begin{figure}
\begin{center}
\begin{tikzpicture}[scale=1]
\draw(0,0) rectangle (7,6);
\draw[fill=blue!10!white] (3,5) rectangle (5.5,6) node[midway] {\tiny 1};
\draw[fill=blue!10!white] (3,4) rectangle (5.5,5) node[midway] {\tiny 2};
\draw[fill=blue!10!white] (0,2) rectangle (3,4) node[midway] {\tiny 3};
\draw[fill=blue!10!white] (5.5,2) rectangle (7,4) node[midway] {\tiny 6};
\draw[fill=blue!10!white] (1.5,0) rectangle (4,2) node[midway] {\tiny 4};
\draw[fill=red!10!white] (4,0) rectangle (5.5,2) node[midway] {\tiny 5};
\draw[fill=orange!10!white] (0,0.5) rectangle (1.5,2);
\draw[fill=orange!10!white] (5.5,0.5) rectangle (7,2);
\node at (.75, 1) {\tiny 7};
\node at (6.25, 1) {\tiny 8};

\node at (1.5,5) {\large $\times$};
\node at (3,2) {\large $\times$};
\node at (4,4) {\large $\times$};
\node at (5.5,3) {\large $\times$};
\node at (4.75,.5) {\large $\times$};

\draw(1.5,0) -- (1.5,-.25) node[below] {$x$};
\draw(3,0) -- (3,-.25) node[below] {$y$};
\draw(4,0) -- (4,-.25) node[below] {$z$};
\draw(5.5,0) -- (5.5,-.25) node[below] {$w$};
\draw(4.75,0) -- (4.75,-.17) node[below] {$i$};
\end{tikzpicture}
\caption{A schematic showing the different regions of $\sigma$.}
\label{fig:CharSimples}
\end{center}
\end{figure}

To complete our characterization we define the following permutation statistic.  We say an index $i$ in a permutation $\pi$ is a \emph{bond} if $\pi_i+1 = \pi_{i+1}$ or $\pi_i-1 = \pi_{i+1}$ and denote the number of bonds in $\pi$ as $\bonds(\pi)$.    

For permutations of length greater than 2, not containing bonds is a necessary condition for being simple.  The next lemma states that this is also sufficient for the simples in $\CC(4132)$.  

Let $\T$ be the set of all permutations  obtained by the following construction.   For $n\geq 3$, choose any $\alpha\in \Av_n(132)$ such that $\alpha_1 = n$ and $\alpha_n = n-1$.  Now insert at most one leading maximum below rows $2,\ldots, n-1$. (We do not allow for an insertion to occur below the 1st or $n$th row as this creates either a sum decomposable permutation or a bond, respectively.) Moreover, insert exactly one leading maximum between any two rows that contain a bond.  For reference we provide an example in Figure~\ref{fig:4132Simples}.

\begin{figure}
\begin{center}
\begin{tikzpicture}[scale=.5, baseline=(current bounding box.center)]
\fill[blue!10!white] (0,3) rectangle (12,4);
\fill[red!10!white] (0,1) rectangle (12,2);
\fill[red!10!white] (0,6) rectangle (12,7);
\fill[blue!10!white] (0,8) rectangle (12,9);
\draw[step=1cm,lightgray,very thin] (0,0) grid (12,12);

\node at (1.5,3.5) {$\times$};
\node at (0.5,1.5) {$\times$};
\node at (2.5,6.5) {$\times$};
\node at (3.5,8.5) {$\times$};
\node at (4.5,11.5) {$\times$};
\node at (5.5,5.5) {$\times$};
\node at (6.5,7.5) {$\times$};
\node at (7.5,4.5) {$\times$};
\node at (8.5,9.5) {$\times$};
\node at (9.5,0.5) {$\times$};
\node at (10.5,2.5) {$\times$};
\node at (11.5,10.5) {$\times$};

\end{tikzpicture}.
\caption{This depicts the 132-avoiding permutation 84536127 after 4 leading maxima have been inserted.  The rows 2 and 7 highlighted in red show where an insertion of leading maxima was forced due to a bond, whereas rows 4 and 9 highlighted in blue represent the insertion of a maximum.}
\label{fig:4132Simples}
\end{center}
\end{figure}
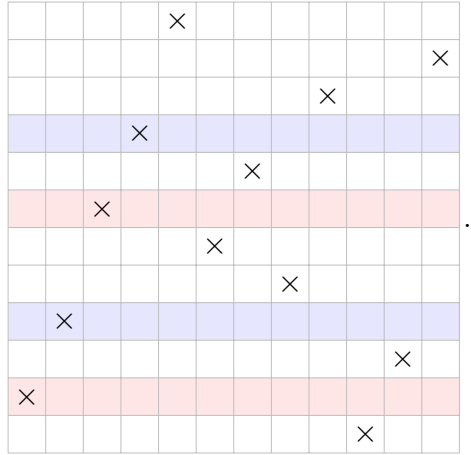

\begin{lemma}\label{lem:description_of_simples}
$\Si(\CC(4132)) = \T \cup \{1, 12,21\}$
\end{lemma}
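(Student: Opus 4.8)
The plan is to prove the two inclusions $\T\cup\{1,12,21\}\subseteq\Si(\CC(4132))$ and $\Si(\CC(4132))\subseteq\T\cup\{1,12,21\}$ separately, handling the trivial permutations $1,12,21$ at the outset since they are clearly simple and lie in $\CC(4132)$, and are excluded from $\T$ by the hypothesis $n\geq 3$. For permutations of length at least $3$, recall that lacking bonds is necessary for simplicity (a bond $\pi_i,\pi_{i+1}$ with $\pi_i\pm 1=\pi_{i+1}$ yields an interval of length $2$), so the real content is an ``if and only if'' between the bond-free elements of $\Si(\CC(4132))$ of length $\geq 3$ and the set $\T$.

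First I would prove $\T\subseteq\Si(\CC(4132))$. Let $\pi\in\T$ arise from $\alpha\in\Av_n(132)$ with $\alpha_1=n$, $\alpha_n=n-1$, by the prescribed leading-maximum insertions. That $\pi\in\CC(4132)$ follows from Lemma~\ref{lem:char:4132}: deleting the inserted leading maxima returns $\alpha$, which avoids $132$ (one must also check that the inserted entries really are leading maxima and that $\alpha$ itself avoiding $132$ with $\alpha_1=n$ forces $\pi$ to avoid $2143$ and $3142$ as well, but this is immediate since any occurrence of a length-$4$ forbidden pattern uses at most one leading maximum, as in the proof of Lemma~\ref{lem:char:4132}). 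For simplicity, suppose for contradiction that $\pi$ has a nontrivial interval $I$ of length $2\le |I|<|\pi|$. Since $\pi$ is bond-free by construction, $|I|\ge 3$. I would argue that $I$ cannot contain an inserted leading maximum together with a non-maximal entry in a separating way, nor can $I$ lie entirely among the $\alpha$-entries without $\alpha$ having a nontrivial interval; and that $\alpha$ has no nontrivial interval because $\alpha_1=n$, $\alpha_n=n-1$, and $\alpha$ is $132$-avoiding of length $\ge 3$ — here one uses that a $132$-avoider with first entry $n$ decomposes as $n\ominus\alpha'$ and the constraint $\alpha_n=n-1$ pins down where $n-1$ sits. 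The bookkeeping about which rows are ``forced'' (bond rows of $\alpha$) versus ``optional'' is exactly what guarantees every would-be interval of $\alpha$ gets broken.

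Next I would prove the reverse inclusion. Let $\sigma\in\Si(\CC(4132))$ with $|\sigma|\ge 3$; by Lemma~\ref{lem:char:4132}, deleting the leading maxima of $\sigma$ yields some $\alpha\in\Av(132)$. I want to recover the structure: the leading maxima of $\sigma$ are inserted ``below rows'' of $\alpha$ in the sense of the $\CC(4132)$ decomposition, at most one per gap between consecutive rows (more than one in a single gap would create a bond among the inserted maxima, contradicting simplicity). Then I must show: (i) $\alpha_1=n$ and $\alpha_n=n-1$ after renormalizing — if $\alpha_1\ne n$ then $1$ is a leading maximum of $\sigma$ forming a prefix, hence $\sigma=1\oplus(\cdots)$ is not simple; if $\alpha_n\ne n-1$ then either $n-1$ is adjacent to $n$ creating a bond, or there is a nontrivial interval; (ii) no leading maximum is inserted below the top or bottom row (top gives sum-decomposability, bottom gives a bond); (iii) every bond of $\alpha$ is separated by an inserted leading maximum in $\sigma$, for otherwise that bond survives in $\sigma$, contradicting bond-freeness of the simple $\sigma$. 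Together these say exactly $\sigma\in\T$.

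The main obstacle I anticipate is the interval analysis in the forward direction ($\T\subseteq\Si$): ruling out every nontrivial interval of $\pi$ requires carefully tracking how inserted leading maxima interleave with the $132$-avoiding skeleton $\alpha$, and in particular showing that the ``optional'' insertions below non-bond rows never conspire to leave an interval intact. I expect the cleanest route is to suppose $I$ is a shortest nontrivial interval, observe $I$ must be bond-free (else $\pi$ has a bond) so $|I|\ge 3$, then show $I$ contains at least one entry of $\alpha$ that is not a leading maximum of $\sigma$, and finally use the defining properties $\alpha_1=n$, $\alpha_n=n-1$ together with $132$-avoidance and the placement rules to derive that $I$ would force a corresponding nontrivial interval (or a $132$-pattern position) in $\alpha$ itself — a contradiction. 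The length-$3$, $4$, $5$ base cases may need to be checked by hand to seed this argument, but that is routine.
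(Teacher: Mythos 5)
Your overall architecture---membership of $\T$ in $\CC(4132)$ via Lemma~\ref{lem:char:4132}, bond-freeness as a necessary condition for simplicity, and the checks (i)--(iii) for the reverse inclusion---is the same as the paper's, and your reverse inclusion is essentially the paper's argument. The forward inclusion $\T\subseteq\Si(\CC(4132))$, however, has a genuine gap. You assert that a $132$-avoiding $\alpha$ with $\alpha_1=n$ and $\alpha_n=n-1$ has no nontrivial interval, and you plan to conclude by transporting a putative interval $I$ of $\pi\in\T$ to a nontrivial interval of $\alpha$ and calling that a contradiction. The assertion is false: $\alpha=4213$ satisfies all the hypotheses and has nontrivial intervals at positions $[2,3]$ (values $\{1,2\}$) and $[2,4]$ (values $\{1,2,3\}$), and $\alpha=53214$ has the interval $\{1,2,3\}$ at positions $[2,4]$. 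Such $\alpha$ typically \emph{do} have intervals---breaking them is precisely what the inserted leading maxima are for---so ``$I$ induces a nontrivial interval of $\alpha$'' is not a contradiction, and your argument terminates without one. (Your own closing sentence, about ``every would-be interval of $\alpha$'' getting broken, is in tension with the earlier assertion.)

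The missing idea, which is the linchpin of the paper's treatment of intervals lying to the right of the leading maxima, is the observation that \emph{every} $132$-avoiding permutation of length at least $2$ contains a bond. Given a candidate interval $[i,j]$ of $\pi$ with $i>\lead(\pi)$, the restriction of $\pi$ to $[i,j]$ is order-isomorphic to a $132$-avoider and hence contains a bond; if $[i,j]$ were an interval of $\pi$, that bond would come from a bond of $\alpha$, between whose two rows the construction \emph{forces} a leading maximum to be inserted; that inserted value lies strictly inside the value range of $[i,j]$ while its position is at the front of $\pi$, so $[i,j]$ is not an interval after all. Intervals meeting the leading maxima are then dispatched separately using $\sigma_{\lead(\sigma)}=n$, $\sigma_n=n-1$, $\sigma_1\neq 1$, and the fact that any two leading maxima straddle some value to their right. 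Without the bond observation, the forced insertions---which occur only at bonds of $\alpha$---give you no purchase on intervals of $\alpha$ of length three or more, and the ``bookkeeping'' step you defer cannot be completed as described.
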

\begin{proof}

A direct application of Lemma~\ref{lem:char:4132} shows that $\T\subset\CC(4132)$.  Therefore in order to show that  $\T \cup \{1, 12,21\}\subset \Si(\CC(4132))$ we must only show that $\T$ consists of only simple permutations.  To this end, fix $\sigma\in \T$ and where $n\geq 4$ and $\ell =\ell(\sigma)$.    First observe that any 132-avoiding permutation of length greater than 1 must contain at least one bond. To see that $\sigma$ is simple, consider any set $[i,j]\neq [1,n]$, with $i<j$.  We now have three cases.

\smallskip

{\tt Case 1:} $\ell\in [i,j]$

\smallskip

From our construction of $\sigma$, it immediately follows that $\sigma_\ell = n$ and $\sigma_n = n-1$.  This implies that if $j<n$, then $[i,j]$ is not an interval in $\sigma$.  Now consider the case that $j=n$.  In order for $[i,j]$ to be an interval in this case, it must contain all the leading maxima in $\sigma$. (Here we use the fact that $\sigma_1 \neq 1$.)  But this is not possible as $1\notin [i,j]$.

\medskip

{\tt Case 2:} $j<\ell$

\smallskip

In this case, the set $[i,j]$ consists entirely of leading maxima.  Moreover, it follows from our construction of $\sigma$ that any two leading maxima in $\sigma$  straddle some value of $\sigma_i$  where $\ell<i$.  Therefore, $[i,j]$ cannot be an interval in $\sigma$.  
    
\medskip

{\tt Case 3:} $\ell<i$

\smallskip

By construction the permutation $\sigma_i\ldots\sigma_j$ is a 132-avoiding permutation.  By our initial observation, this permutation contains a bond.  Our construction of $\sigma$ then prevents $[i,j]$ from being an interval in $\sigma$.

It now remains to show that $\Si(\CC(4132))\subset \T\cup \{1, 12, 21\}$.  As $S_3$ contains no simple permutations, we may fix  $\sigma\in \Si(\CC_{\geq 4}(4132))$.  Set $\ell = \lead(\sigma)$.   From Figure~\ref{fig:2143_4132_decomp}, and Lemma~\ref{lem:char:4132} it will immediately follow that $\sigma\in \T$ provided that $\sigma_n = n-1$.   To see that this must hold assume not.  Then $\sigma$ would have been constructed from a 132-avoiding permutation of the form $\alpha\ominus \beta,$ where $\beta\neq \emptyset$. Then in $\sigma$ the values corresponding to $\alpha$ and the values inserted between rows of $\alpha$ constitute a nontrivial interval in $\sigma$.  We conclude the $\sigma\in\T$ as needed.

\end{proof}

Having completed our characterization of the simples in $\CC(263514)$, we now consider how to (uniquely) obtain an arbitrary element in this class from one of its simples.  This task will require a new definition which we state next.  A \emph{LR-minimum} is an index $i$ in a permutation such that $\pi_j>\pi_i$ for all $j<i$.  

\begin{lemma}\label{lem:inflation-1}
Fix $\sigma\in \Si(\CC_n(263514))$ and  $\rho^{(1)},\ldots, \rho^{(n)} \in \CC(263514)$.  Then 
$$\sigma[\rho^{(1)},\ldots, \rho^{(n)}]\in \CC(263514),$$
provided that $\rho^{(i)}$ is an increasing pattern whenever $\sigma_i$ is the $1$ in a $132$-pattern or the $3$ in a $213$-pattern.  Moreover,  this is the only way to inflate $\sigma$ into an element of $\CC(263514)$.   
\end{lemma}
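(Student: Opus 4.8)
The plan is to break the statement into three assertions and handle each in turn: (i) that the stated condition on the $\rho^{(i)}$ is \emph{sufficient} to guarantee $\sigma[\rho^{(1)},\ldots,\rho^{(n)}]\in\CC(263514)$; (ii) that the condition is \emph{necessary} for membership in $\CC(263514)$; and (iii) that the decomposition is \emph{unique}, i.e.\ every $\pi\in\CC(263514)$ arises from exactly one simple $\sigma$ and one admissible sequence of inflating permutations. For (iii) I would invoke the standard substitution-decomposition fact (Albert--Atkinson) that every permutation of length $\geq 2$ is either sum-decomposable, skew-decomposable, or the inflation of a unique simple permutation of length $\geq 4$ by a uniquely determined sequence of intervals; since $1,12,21$ are the only simples of length $<4$ and these correspond to singleton/sum/skew cases, writing $\pi=\sigma[\rho^{(1)},\ldots,\rho^{(n)}]$ with $\sigma$ simple is unique once we know each $\rho^{(i)}\in\CC(263514)$ — and that last point is immediate because $\CC(263514)$ is closed under taking pattern-subsequences, hence under taking intervals.

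For sufficiency (i), I would argue by contradiction: suppose $\pi=\sigma[\rho^{(1)},\ldots,\rho^{(n)}]$ contains an occurrence of one of $2143$, $3142$, $263514$, and look at which inflated blocks $\rho^{(i)}$ the entries of that occurrence land in. If all entries of the bad pattern lie in distinct blocks, they witness the same pattern in $\sigma$, contradicting $\sigma\in\CC(263514)$ (recall $\sigma$ is simple, hence in particular in $\CC(263514)$). So at least two entries share a block. Since $\rho^{(i)}\in\CC(263514)\subseteq\Av(2143,3142)$ and in particular each block is $(2143,3142)$-avoiding, a careful case analysis — using that two entries of a $2143$, $3142$, or $263514$ occurrence lying in one interval must be an \emph{adjacent} pair in the pattern, and that the remaining entries then play a reduced pattern in $\sigma$ together with the deflated point for that block — forces the deflated pattern in $\sigma$ to be one of $2143$, $3142$, $263514$, $132$, or $213$. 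The first three are excluded since $\sigma$ avoids them; the $132$ and $213$ cases are exactly where the hypothesis bites: the shared block sits at the position of the ``$1$'' of a $132$ or the ``$3$'' of a $213$ in $\sigma$, so that block is increasing, and an increasing block cannot supply two entries of $2143$, $3142$, or $263514$ that are decreasing-adjacent in the pattern — which is the only way two entries of those three patterns can occupy a single interval. (One should check that in $263514$ the adjacent pairs that could collapse into one block are among $\{26,63,35,51,14\}$ relative to the value-order, i.e.\ the relevant reduced patterns after collapsing really are $132$ or $213$; this is the routine computation I would not grind through here.)

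For necessity (ii), the argument runs in reverse: suppose $\sigma_i$ is the ``$1$'' of a $132$-pattern $\sigma_a\sigma_i\sigma_b$ in $\sigma$ (the $213$ case is symmetric), and suppose $\rho^{(i)}$ is \emph{not} increasing, so it contains a descent, i.e.\ two consecutive-in-position values $p>q$ inside block $i$. Then in $\pi$ the four values (one representative from block $a$, the pair $q<p$ from block $i$ in this order, one representative from block $b$) should form a $263514$-type obstruction — more precisely one needs to also pull in a representative from two further blocks to build the full length-$6$ pattern $263514$; since $\sigma$ is simple and $\sigma_a\sigma_i\sigma_b$ is a $132$, simplicity lets us locate the extra blocks needed (this is exactly the role simplicity plays — it rules out the interval structure that would otherwise let us avoid the pattern) and assemble a genuine occurrence of $263514$ in $\pi$, contradiction.

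The main obstacle I expect is the case analysis in step (i): one must verify that \emph{every} way two entries of a $2143$, $3142$, or $263514$ occurrence can land in a common interval reduces $\sigma$'s induced pattern to something on the forbidden list $\{2143,3142,263514,132,213\}$, and in the $132$/$213$ subcases that the ``increasing block'' hypothesis genuinely blocks the obstruction. This is finite but fiddly, especially for $263514$ where there are several adjacent pairs and one must track both the value-adjacency and position-adjacency carefully; getting the list of reduced patterns exactly right (and in particular confirming it never produces a pattern \emph{not} in the basis, which would sink sufficiency) is the delicate point.
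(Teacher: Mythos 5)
Your overall framework for sufficiency (classify how the entries of a forbidden occurrence distribute over the inflation blocks, using that the entries landing in a single block form an interval of the pattern) is the right one, and it is essentially the paper's route; but two of your specifics go wrong. First, $3142$ and $263514$ are themselves \emph{simple} patterns, so no proper subset of size at least $2$ of an occurrence can ever sit inside one block: the paper dispatches these two patterns in one line (an inflation of a $\{3142,263514\}$-avoider by $\{3142,263514\}$-avoiders still avoids both), and your parenthetical hunt for ``adjacent pairs of $263514$ that could collapse into one block'' is chasing something that does not exist. Everything reduces to $2143$, whose only nontrivial proper intervals are $\{1,2\}$ and $\{3,4\}$, i.e.\ the two descents. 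Second, your list of reduced patterns omits the case in which the ``$21$'' lands entirely in one block $i$ and the ``$43$'' entirely in a different block $j$; the deflated pattern in $\sigma$ is then just $12$, not $132$ or $213$, and ruling this case out is precisely where the simplicity of $\sigma$ is needed (if neither $\sigma_i$ is the $1$ of a $132$ nor $\sigma_j$ is the $3$ of a $213$, one checks that $[i,j]$ is a nontrivial interval of $\sigma$). As written, your sufficiency argument has a hole exactly at the one subcase that requires a real idea.

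The necessity direction in your proposal is based on the wrong obstruction. If $\sigma_i$ is the $1$ of a $132$-pattern $\sigma_i\sigma_a\sigma_b$ (with $i<a<b$ and $\sigma_i<\sigma_b<\sigma_a$) and $\rho^{(i)}$ contains a descent $p>q$, then the four entries $p,q$ together with one representative each from blocks $a$ and $b$ already form a $2143$ in the inflation; the $213$ case is symmetric. No occurrence of $263514$ is being built, no extra blocks need to be ``located,'' and simplicity plays no role here --- indeed there is no reason the inflated permutation should contain $263514$ at all, so the ``$263514$-type obstruction'' you describe cannot be assembled in general. This part needs to be replaced, not just fleshed out.
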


\begin{proof}
As $\sigma \in \CC(263514)$ and both the patterns 3142 and 263514 are simple, it follows that $\sigma[\rho^{(1)},\ldots, \rho^{(n)}]$ will always avoid 3142 and 265314 for any $\rho^{(i)}\in \CC(263514)$.   Therefore we only need to consider the pattern 2143.  It is clear that we can always inflate a given position in $\sigma$ with the monotone pattern $\id_k$ since this cannot create an occurrence of 2143.  Moreover, a position $i$ in $\sigma$ can  be inflated by an arbitrary element in $\CC(263514)$ if and only if $\pi_i$ is neither the $1$ in a 132-pattern nor the 3 in a $213$-pattern.  
\end{proof}

The next lemma serves to identify which indices $i$ in $\sigma$ have the property that either $\sigma_i$ is  a $1$ in a 132-pattern or a 3 in a $213$-pattern.

\begin{lemma}\label{lem:inflation-2}
Fix $\sigma\in \Si(\CC(263514))$ and set $\ell = \ell(\sigma)$.   The value $\sigma_i$ is a $1$ in a $132$-pattern if and only if $i<\ell$.  Additionally, the value $\sigma_i$ is a $3$ in a $213$-pattern if and only if  $\ell<i$ and $i-\ell$ is not a LR-minimum in the $132$-avoiding permutation $\sigma_{\ell+1}\sigma_{\ell+2}\ldots$.  
\end{lemma}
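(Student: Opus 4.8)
The plan is to exploit the explicit description of the simples supplied by Lemmas~\ref{lem:263514-simples} and~\ref{lem:description_of_simples}. Since $\Si(\CC(263514))=\Si(\CC(4132))=\T\cup\{1,12,21\}$, and the three exceptional simples correspond to degenerate inflations (a point, a direct sum, a skew sum) treated separately in the enumeration, I would assume $\sigma\in\T$, so that $n=|\sigma|\ge 4$. For such $\sigma$ I would record four facts used throughout: the leading maxima occupy positions $1,\dots,\ell$ with $\sigma_1<\cdots<\sigma_\ell$; by the proof of Lemma~\ref{lem:description_of_simples} one has $\sigma_\ell=n$ and $\sigma_n=n-1$; deleting the leading maxima leaves the suffix $w=\sigma_{\ell+1}\sigma_{\ell+2}\cdots\sigma_n$, which is $132$-avoiding by Lemma~\ref{lem:char:4132}; and, being simple of length $\ge 4$, $\sigma$ has no bonds, i.e.\ no two adjacent positions carry consecutive values.

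For the first assertion I would argue both directions directly. If $i<\ell$ then $\sigma_i<\sigma_\ell=n$, and since $n-1=\sigma_n$ lies to the right of $\ell$ it is not a leading maximum, so $\sigma_i\neq n-1$ and hence $\sigma_i<n-1$; thus the positions $i<\ell<n$ carry values $\sigma_i<(n-1)<n$, an occurrence of $132$ having $\sigma_i$ as its ``$1$''. Conversely, if $\sigma_i$ were the ``$1$'' of a $132$-pattern with $i\ge\ell$, then either $i=\ell$ (impossible, as $\sigma_\ell=n$ is the largest value) or $i>\ell$, which forces all three positions of the pattern past $\ell$ and thus produces a $132$-pattern inside $w$, contradicting its $132$-avoidance.

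For the second assertion the forward direction is short: a $213$-pattern with $\sigma_i$ as its ``$3$'' requires positions $p<q<i$ with $\sigma_q<\sigma_p<\sigma_i$. Because the leading maxima increase, $p,q$ cannot both lie in $\{1,\dots,\ell\}$, so $q>\ell$; and $i>\ell$, as otherwise $p<q<i\le\ell$ would be increasing leading maxima violating $\sigma_q<\sigma_p$. Moreover $q$ is then a suffix position before $i$ with $\sigma_q<\sigma_i$, so $\sigma_i$ is not smaller than every earlier suffix entry, i.e.\ $i-\ell$ is not an LR-minimum of $w$. The real content is the converse: assuming $i>\ell$ and that $i-\ell$ is not an LR-minimum of $w$, I must manufacture a $213$-pattern ending at $i$. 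Note first that $i-\ell\ge 2$ (the case $i=\ell+1$ is the first suffix entry, automatically an LR-minimum). I would let $q$ be the position of the \emph{largest} suffix value below $\sigma_i$ occurring before $i$, which exists precisely because $i-\ell$ is not an LR-minimum. A dichotomy on the entries strictly between $q$ and $i$ then forces $q=i-1$: a smaller value there gives, with $\sigma_q,\sigma_i$, a $213$ directly, whereas a value exceeding $\sigma_i$ gives, with $\sigma_q,\sigma_i$, a forbidden $132$ inside $w$. Finally I would locate the value $\sigma_i-1$: it cannot sit at position $i\pm1$ (no bonds), cannot sit in the suffix before $i$ (that would beat the maximality of $\sigma_q=\sigma_{i-1}$, which satisfies $\sigma_{i-1}\le\sigma_i-2$), and cannot sit in the suffix after $i$ (that would create a $132$ inside $w$ using $\sigma_{i-1}$, $\sigma_i$, $\sigma_i-1$). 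Hence $\sigma_i-1$ is a leading maximum, and the triple $(\sigma_i-1,\sigma_{i-1},\sigma_i)$ at positions (leading)$<i-1<i$ is the desired $213$.

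The step I expect to be the main obstacle is exactly this converse for the second assertion: it is the only place where one must \emph{produce} a pattern rather than forbid one, and doing so forces a simultaneous use of simplicity (through the absence of bonds, to pin down where the value $\sigma_i-1$ can live) and the $132$-avoidance of the suffix $w$ (to discard the surviving placements). I would conclude by checking the three exceptional simples by hand, noting that the statement holds vacuously for $1$ and for $21$, while $12$ is the identity and is governed by the separate direct-sum bookkeeping rather than by this lemma.
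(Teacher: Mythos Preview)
Your proof is correct and follows essentially the same strategy as the paper's: both arguments rest on the structural description of $\Si(\CC(263514))=\T\cup\{1,12,21\}$ from Lemmas~\ref{lem:263514-simples} and~\ref{lem:description_of_simples}, exploit the facts $\sigma_\ell=n$, $\sigma_n=n-1$, and the $132$-avoidance of the suffix for the first assertion, and combine the absence of bonds with the $132$-avoidance of the suffix to manufacture a $213$ for the converse of the second assertion.

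The one notable difference is in the execution of that converse. The paper picks an arbitrary witness $y$ with $\ell<y<i$ and $\sigma_y<\sigma_i$, invokes ``no bonds'' to produce an auxiliary index $x$ either between $y$ and $i$ in position or between $\sigma_y$ and $\sigma_i$ in value, and then asserts that a ``straightforward check'' forces a $213$. Your version is more explicit: you choose $q$ with $\sigma_q$ \emph{maximal} among such witnesses, reduce to $q=i-1$ (or finish early), and then pin down the location of the value $\sigma_i-1$, showing it must be a leading maximum that completes the $213$. This extra precision is a genuine improvement over the paper's sketch, which as written does not clearly exclude the possibility that $\sigma_y,\sigma_x,\sigma_i$ form a $123$ rather than a $213$; your maximality choice is exactly what closes that gap. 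Your handling of the exceptional simples $1,21,12$ is also apt: the lemma is really only meant for $\sigma\in\T$, and you correctly flag that $\sigma=12$ falls outside its scope and is absorbed by the direct-sum bookkeeping.
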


\begin{proof}
Set $n= |\sigma|$.  To see the first claim, recall from our characterization of $\sigma$ that $\sigma_\ell = n$ and $\sigma_n = n-1$.  It now follows $\sigma_i\sigma_\ell\sigma_n$ is a 132.  Indeed this is the only possibility since $\sigma_\ell\ldots\sigma_n$ is a 132-avoiding permutation by our characterization.  

For the second claim, let us begin by assuming $i-\ell$ is a LR-minimum in $\sigma_{\ell+1}\sigma_{\ell+2}\ldots\sigma_{n}$.  From our characterization of $\sigma$, this implies that in $\sigma$ all the values southwest of $\sigma_i$ are increasing.  Therefore $\sigma_i$ cannot be such a 3.  Likewise, if $i<\ell$, then for the same reason $\sigma_i$ is not a 3 in an occurrence of a 213-pattern.    For the other direction, assume $\ell<i$ and $i-\ell$ is not a LR-minimum in the 132-avoiding permutation $\sigma_{\ell+1}\sigma_{\ell+2}\ldots$.  This means that in $\sigma$ there exists some $\ell<y$ such that $\sigma_y<\sigma_i$.  As $\sigma$ is simple and hence contains no bonds, then it must have some index $x$ such that  $y<x<i$ or such that $\sigma_y<\sigma_x<\sigma_i$.  Since $\sigma_{\ell+1}\sigma_{\ell+2}\ldots$ avoids 132, a straightforward check shows that the only possibility is for either $\sigma_x\sigma_y\sigma_i$ or $\sigma_y\sigma_x\sigma_i$ to be a 213-pattern.  This completes our proof.  
\end{proof}

Before closing this section, the following standard proposition, due to Albert and Atkinson, shows that every permutation arises from the inflation of a (unique) simple permutation.  This result along with the previous results in this section together imply that every element in $\CC(263514)$ arises (uniquely) from the inflation of a permutation in $\Si(\CC(263514))$ according to the prescriptions given in Lemmas~\ref{lem:inflation-1} and~\ref{lem:inflation-2}.

\begin{prop}[\cite{AA}]
Given a permutation $\pi$, there exists a unique simple permutation $\sigma$ of length $k$ such that $\pi = \sigma[\rho^{(1)},\ldots,\rho^{(k)}]$.  When $\sigma \neq 12, 21$, the $\rho^{(i)}$ are uniquely determined.  When $\sigma = 12$ (respectively, $21$), the $\rho^{(i)}$ are uniquely determined provided we insist that $\rho^{(1)}$ is sum indecomposable (respectively, skew-sum indecomposable).
\end{prop}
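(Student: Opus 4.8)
The plan is to follow the standard substitution (modular) decomposition argument for permutations. Recall first that $[1,n]$ and every singleton $\{i\}$ are intervals (the \emph{trivial} ones), and that the union and the intersection of two \emph{overlapping} intervals (intervals $I,J$ with $I\cap J\neq\emptyset$, neither containing the other) are again intervals -- immediate from the definition. The key structural input is a dichotomy: if $\pi\in S_n$ with $n\geq 2$ has two distinct overlapping maximal proper intervals $I,J$, then $I\cup J=[1,n]$ and, writing them as initial/final position segments, their value sets force $\{\pi_1,\dots,\pi_{b}\}$ to be a bottom or a top interval for $b=|I|$; hence $\pi=\alpha\oplus\beta$ or $\pi=\alpha\ominus\beta$. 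Consequently, if $\pi$ is neither sum- nor skew-decomposable, its maximal proper intervals are pairwise disjoint, and since every singleton lies in one of them they partition $[1,n]$ into \emph{blocks} $b_1,\dots,b_k$.

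For \textbf{existence}: if $n\leq 1$ take $\sigma=\pi$. If $n\geq 2$ and $\pi$ is sum-decomposable, let $\rho^{(1)}$ be the leftmost $\oplus$-indecomposable summand of $\pi$, so $\pi=\rho^{(1)}\oplus\mu=12[\rho^{(1)},\mu]$ with $\rho^{(1)}$ sum-indecomposable; symmetrically if $\pi$ is skew-decomposable one gets $\pi=21[\rho^{(1)},\mu]$ with $\rho^{(1)}$ skew-indecomposable (a permutation cannot be both, by an easy comparison of the sizes of the two initial segments). Otherwise the maximal proper intervals partition $[1,n]$ into blocks $b_1,\dots,b_k$ as above; deflating each block to a point gives $\pi=\sigma[\rho^{(1)},\dots,\rho^{(k)}]$, where $\rho^{(i)}$ is the restriction of $\pi$ to $b_i$ and $\sigma$ is the resulting pattern. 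A nontrivial interval of $\sigma$ would pull back to a proper interval of $\pi$ strictly containing some block, contradicting maximality; hence $\sigma$ is simple. Moreover $k\neq 2,3$: a length-$2$ simple would make $\pi$ sum- or skew-decomposable, and there is no simple permutation of length $3$. So $\sigma=\pi$ (and $k=n$) exactly when $\pi$ is simple, and $|\sigma|\geq 4$ otherwise.

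For \textbf{uniqueness}, the heart of the matter is: if $\sigma$ is simple with $|\sigma|=k\geq 4$ and $\pi=\sigma[\rho^{(1)},\dots,\rho^{(k)}]$, then the block positions $b_1,\dots,b_k$ are \emph{exactly} the maximal proper intervals of $\pi$; since these are intrinsic to $\pi$, this pins down $\sigma$ (the deflation) and each $\rho^{(i)}$ (the restriction). To prove it one shows every interval $I\neq[1,n]$ of $\pi$ lies inside a single block. Suppose $I$ meets blocks $b_p,\dots,b_q$ with $p<q$; the interior blocks $b_{p+1},\dots,b_{q-1}$ then lie entirely in $I$, and because $\sigma$ has no nontrivial interval, no two positionally adjacent blocks have adjacent value-ranges (that would be a two-element interval of $\sigma$). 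Tracking how the value-interval of $I$ is assembled from block value-ranges then forces $\{\sigma_p,\dots,\sigma_q\}$ to be an interval of $\sigma$, so $\{p,\dots,q\}=\{1,\dots,k\}$ and $I=[1,n]$ -- contradiction, unless $p=q$. One corollary is that such a $\pi$ is neither sum- nor skew-decomposable: a sum-decomposition would put the bottom value-prefix interval inside $b_1$, forcing $\sigma_1=1$, which is impossible for a simple permutation of length $\geq 3$; skew is symmetric. This separates the case $|\sigma|\geq 4$ from the cases $\sigma\in\{12,21\}$. Finally, in the case $\sigma=12$ one has $\pi=\rho^{(1)}\oplus\rho^{(2)}$ with $\rho^{(1)}$ sum-indecomposable, and any sum-indecomposable initial summand of $\pi$ must be its leftmost $\oplus$-indecomposable summand (a shorter initial segment with bottom-interval value set would itself be sum-decomposable), so $\rho^{(1)}$ and then $\rho^{(2)}$ are forced; $\sigma=21$ is symmetric with $\ominus$.

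The step I expect to be the main obstacle is the interval-classification claim inside the uniqueness argument -- ruling out a ``crossing'' interval that grabs a suffix of one block's values together with a prefix of the next. This is precisely where simplicity of $\sigma$ (no positionally adjacent blocks with adjacent value-ranges) and the bound $k\geq 4$ are used, and it is also what forces $12$ and $21$ to be treated separately with the indecomposability condition on $\rho^{(1)}$. Everything else is bookkeeping once this classification is available.
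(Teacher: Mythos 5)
The paper offers no proof of this proposition at all: it is quoted verbatim from Albert and Atkinson~\cite{AA} as a known result, so there is nothing in the paper to compare your argument against line by line. What you have written is a sketch of the standard substitution-decomposition proof, and it is essentially the argument in the cited source: overlapping intervals have interval unions, intersections and differences; overlapping maximal proper intervals force sum- or skew-decomposability; otherwise the maximal proper intervals partition the positions and deflate to a simple quotient of length at least $4$; and uniqueness reduces to showing every proper interval of an inflation of a simple $\sigma$ with $|\sigma|\ge 4$ sits inside a single block. One step in your uniqueness sketch is elided in a way worth flagging: after you conclude that the set $S=\{p,\dots,q\}$ of blocks met by a proper interval $I$ is an interval of $\sigma$ and hence equals $\{1,\dots,k\}$, the conclusion $I=[1,n]$ does \emph{not} follow immediately, since $I$ could still contain only a positional suffix of $b_1$ and a prefix of $b_k$. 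You need the further observation that a simple permutation of length at least $3$ has $\sigma_1\notin\{1,k\}$ and $\sigma_k\notin\{1,k\}$, so the two positionally extreme blocks cannot be the two value-extreme blocks, forcing every block to lie entirely inside $I$. Your stated mechanism (no two positionally adjacent blocks with adjacent value ranges) handles the case $q=p+1$ but not this endpoint case. You correctly identify this as the crux, and the same observation also cleanly gives your claim that an inflation of a simple $\sigma$ with $|\sigma|\ge 4$ is neither sum- nor skew-decomposable; with that detail supplied the proposal is complete and matches the standard proof.
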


\subsection{Enumeration}

We now transform the descriptions in the previous subsection into functional equations.  The generating function for the simple permutations $\Si(\CC(263514))$ is
$$s(u,x) = \sum_{\substack{\sigma\in\ \Si(\CC_n(263514))\\n\geq 4}} u^{n-\lrmin(\widehat{\sigma})-1}x^{\lrmin(\widehat{\sigma})+1} ,$$ where $\widehat{\sigma}$ is the result of stripping $\sigma$ of its leading maxima and $u$ marks the number of positions that must be inflated by the increasing pattern whereas $x$ marks the number of positions that may be inflated by an arbitrary permutation in the class.  Lemma~\ref{lem:263514-simples}  and \ref{lem:description_of_simples} now suggest setting
$$C(t,u,x) = \sum_{n\geq 2}\sum_{\substack{\pi\in \Av_n(132) \\ \pi_n = n}}  t^{\bonds(\pi)}u^{\lrmin(\pi)}x^n,$$ with $t$ marking the number of bonds, and $u$ marking the number of LR-minima.  Additionally, these lemmas imply the  equation:
$$s(u,x) = \frac{x}{1+ u}C\left(\frac{u}{1+u},\frac{x}{u},u(1+u)\right).$$

To find an expression for $C(t,u,x)$ we set 
$$h(t,u,x) = \sum_{n\geq 0}\sum_{\substack{\pi\in \Av_n(132) \\ \pi_n \neq n}} x^n t^{\bonds(\pi)}u^{\lrmin(\pi)},$$
and
$$g(t,u,x) = \sum_{n\geq 0}\sum_{\substack{\pi\in \Av_n(132) \\ \pi_1 \neq n}} x^n t^{\bonds(\pi)}u^{\lrmin(\pi)},$$
since then
\[
C(t,u,x) = \frac{x(h-1)}{1-xt} + \frac{utx^2}{1-xt}.
\]

Using the classic decomposition of 132-avoiding permutations, we see that $h$ and $g$ satisfy the following functional equations
\begin{align*}
h(t,u,x)=1&+ x \left(\frac{(h-1) t x}{1-t x}+h+\frac{t u x}{1-t x}-1\right) \left(\frac{ux}{1-t u x}g+g-1\right)\\
&+u x \left(\frac{tux}{1-t u x}g+g-1\right)
\end{align*}
and 
$$g(t,u,x) =1+ x \left(\frac{(h-1) t x}{1-t x}+h+\frac{t u x}{1-t x}-1\right) \left(\frac{ u x}{1-t u x}g+g\right).$$

Outsourcing the computations to a CAS, we see that 
$$s(u,x) = -\frac{x \left(-1+u+3 u x+u x^2 -\sqrt{1+u^2 \left(x^2+x+1\right)^2-2 u (x^2+3x+1)}\right)}{2 (u+1) (x+1)}.$$

We are now ready to count all permutations in $\CC(263514)$. Set
$$f(x) = \sum_{n\geq 1}  |\CC_n(263514)| x^n$$
and let $f_\oplus$ and $f_\ominus$ be the generating function for all sum and skew-sum decomposable permutations in $\CC(263514)$, respectively.  As none of the patterns 2143, 3142, and 263514 are skew-sum decomposable it follows that $f_\ominus = (f - f_\ominus)f$.  Therefore
 $$f_\ominus = \frac{f^2}{1+f}.$$  
On the other hand, since 2143 is the only sum-decomposable pattern involved, it follows that any sum decomposable permutation is either of the form $1\oplus \pi$ or $\pi\oplus 1$.  It follows that $$f_\oplus = 2xf - x^2(f+1)$$ 
where we take care not to overcount the permutations that are of both forms.  Putting this together, we obtain the long sought functional equation:
$$f = x + f_\ominus + f_\oplus + s\left(\frac{x}{1-x},f\right).$$

Using a CAS we see that the large Schr\"oder numbers are a solution to this functional equation, proving the last case.

\end{document}